\newtheorem{theorem}{Theorem}
\newtheorem{lemma}[theorem]{Lemma}
\newtheorem{corollary}[theorem]{Corollary}
\newcommand{\ds}{\displaystyle}
\title{
	Maximal independent sets and maximal matchings in series-parallel and related graph classes
}
\author{
Michael Drmota
	\thanks{
        Institute of Discrete Mathematics and Geometry, Technische Universität Wien, Austria.
        Supported by the Special Research Program F50 {\em Algorithmic and Enumerative Combinatorics} of the Austrian Science Fund.
        Email: {\tt michael.drmota@tuwien.ac.at}.
    }
\and
Lander Ramos
	\thanks{
        Departament de Matemàtiques, Universitat Politècnica de Catalunya, Barcelona, Spain.
        Email: {\tt landertxu@gmail.com}.
    }
\and
Clément Requilé
	\thanks{
        Institute of Discrete Mathematics and Geometry, Technische Universität Wien, Austria.
	    Supported by the Special Research Program F50 {\em Algorithmic and Enumerative Combinatorics} of the Austrian Science Fund.
        Part of the work presented in this paper was carried out while the author was affiliated with the  Institute for Algebra, Johannes Kepler Universität Linz, Austria.
        E-mail: {\tt clement.requile@tuwien.ac.at}.
    }
\and
Juanjo Rué
	\thanks{
        Departament de Matemàtiques, Universitat Politècnica de Catalunya and Barcelona Graduate School of Mathematics, Spain.
        Partially supported by the FP7-PEOPLE-2013-CIG project CountGraph (ref. 630749),
	    the Spanish MICINN projects MTM2014-54745-P and MTM2017-82166-P
        and the María de Maetzu research grant MDM-2014-0445.
        Email: {\tt juan.jose.rue@upc.edu}.
    }
}
\begin{document}

\maketitle

\begin{abstract}
	The goal of this paper is to obtain quantitative results on the number and on the size of maximal independent sets and maximal matchings in several block-stable graph classes that satisfy a proper sub-criticality condition.
	In particular we cover trees, cacti graphs and series-parallel graphs.
	The proof methods are based on a generating function approach and a proper singularity analysis of solutions of implicit systems of functional equations in several variables.
	As a byproduct, this method extends previous results of Meir and Moon for trees [Meir, Moon: On maximal independent sets of nodes in trees, Journal of Graph Theory 1988].
\end{abstract}

%
%
\newpage

\section{Introduction}\label{sec:introduction}

In this paper we consider labelled, loopless and simple graphs only.
For a graph $G=(V(G),E(G))$, a subset $J$ of $V(G)$ is said to be \emph{independent} if for any pair of vertices $x$ and $y$ contained in $J$, the edge $\{x,y\}$ does not belong to $E(G)$.
An independent set $J$ of a graph $G$ is said to be {\em maximal} if any other vertex of $G$ that is not contained in $J$ is adjacent to at least one vertex of $J$.
A subset $N$ of the edge set $E(G)$ is called a {\it matching} if every vertex $x$ of $G$ is incident to at most one edge of $N$.
A matching $N$ is called {\em maximal} if it cannot be extended to a bigger matching by adding an edge from $E(G)\setminus N$.

More precisely, let $\mathcal{G}$ denote a class of vertex-labelled graphs (vertices on a graph on $n$ vertices are labelled by $\{1, 2, \ldots, n\}$).
We denote by $\mathcal{G}_n$ the set of graphs in $\mathcal{G}$ with $n$ vertices.
For $G\in \mathcal{G}$ we denote by $I(G)$ the set of maximal independent sets of $G$ and by
\begin{equation*}
    \mathcal{I}_n = \bigcup_{ G \in \mathcal{G}_n}  I(G) \times \{G\}
\end{equation*}
the system of all maximal independent sets of graphs in $\mathcal{G}_n$.
That is, every maximal independent set $J$ is {\it indexed} by the corresponding graph, this is formally done by taking pairs $(J,G)$.
Similarly, we denote by $M(G)$ the set of maximal matchings of $G$ and by
\begin{equation*}
    \mathcal{M}_n = \bigcup_{ G \in \mathcal{G}_n}  M(G) \times \{G\}
\end{equation*}
the system of all maximal matchings of graphs of size $n$.
The purpose of this paper is to enumerate maximal independent sets and maximal matchings by means of symbolic methods.
Later, by using complex analytic tools, we will study their size distribution.

Our research will focus on graph classes defined by an analytic condition that is reminiscent to the subcritical condition in constrained graph classes.
Roughly speaking, a graph class is called \emph{subcritical} if the largest block on a uniformly at random graph with $n$ vertices in the class has $O(\log(n))$ vertices, a block being a 2--connected component that is maximal in the sense of inclusion.
The precise analytic definition (which is done in terms of generating functions) can be found in Subsection \ref{subs:subcr} (see \cite{Bernasconi2009,DrFuKaKrRu11}).
Subcritical graphs have typically a tree--like structure and share several properties with trees.
For instance, forests, cacti graphs, outerplanar graphs, series--parallel graphs and more generally graph families defined by a finite set of 3--connected components are subcritical graph families \cite{GiNoRu13}.
Nowadays, we have a quite good understanding of the shape of a uniformly at random graph of size $n$: we know exact results for subgraph statistics \cite{subgraphs}, their Benjamini--Schramm convergence \cite{schben15}, their maximum degree \cite{DrNo2013} and their scaling limits towards the continuous random tree \cite{panagiotou2015}.
Let us also mention that the study of subcritical graph classes is intimately linked to the understanding of the random planar graph model.
In fact, it it was conjectured that a graph class defined by a set of excluded minors is subcritical if and only if at least one of the excluded graphs is planar (see \cite{NoyICM}).
One direction of this conjecture is false: by the work of \cite{georWa18}, there exists a minor--closed addable subcritical graph family which contains all planar graphs. However, the converse implication in the conjecture is still unknown.

To clarify our setting, let us remark that our research will not be applied to subcritical graph classes in the classical sense of the papers \cite{Bernasconi2009,DrFuKaKrRu11}.
In fact, our results can be applied to graph classes that satisfy some \emph{extended} sub--criticality conditions which are slightly different from the usual subcritical analytic condition.
We will make this definition precise in Subsection~\ref{subs:subcr}.
In fact, we will show that relevant subcritical graph classes (trees, cacti graphs and series--parallel graphs) fit into this new scheme.

These techniques will be used to obtain precise enumerative results on $\mathcal{I}_n$ and $\mathcal{M}_n$.
In particular, we will apply our method to three important graph families: Cayley trees, cacti graphs and series--parallel graphs.
For the mentioned graph classes we have the following universal structure in the asymptotic enumeration formula for the number of graphs on $n$ vertices, for $n$ large enough:
\begin{equation}\label{eqgn}
    g_n = |\mathcal{G}_n| \sim c\, n^{-5/2} \rho^{-n} n!,
\end{equation}
where $c > 0$ and $\rho$ is the radius of convergence of the (exponential) generating function $G(x) = \sum_{n\ge 0} g_n \frac{x^n}{n!}$ associated to the graph class under study.
In particular, we will restrict to so-called \emph{aperiodic} structures (see also Subsection~\ref{subs:subcr}), where the asymptotic relation \eqref{eqgn} holds for all $n$ and not only in some residue class.

The first result is an asymptotic estimate for both $|\mathcal{I}_n|$ and $|\mathcal{M}_n|$:

\begin{theorem}\label{thm:main1}
    Let $\mathcal{G}$ be a graph class satisfying proper aperiodicity and extended sub--criticality conditions (defined in Subsection~\ref{subs:subcr} and Section~\ref{sec:analysis-systems}), and let $\rho$ be the radius of convergence of the generating function $G(x)$ associated to $\mathcal{G}$.
    Then we have:
    \begin{equation*}
        |\mathcal{I}_n| \sim A_1\, n^{-5/2} \rho_1^{-n} n!
        \quad \mbox{and}\quad
        |\mathcal{M}_n| \sim A_2\, n^{-5/2} \rho_2^{-n} n!,
    \end{equation*}
    where $A_1, A_2, \rho_1, \rho_2$ are positive constants such that $0< \rho_1 < \rho$ and $0< \rho_2 < \rho$.
\end{theorem}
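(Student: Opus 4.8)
The plan is to reduce the enumeration of $\mathcal{I}_n$ and $\mathcal{M}_n$ to a singularity analysis of solutions of implicit systems of functional equations and then invoke the general analytic machinery of Section~\ref{sec:analysis-systems}. The backbone is the block decomposition of a connected graph in $\mathcal{G}$: a vertex-rooted connected graph consists of a root to which one attaches a (possibly empty) collection of $2$-connected blocks, and to every non-root vertex of each block one recursively attaches a vertex-rooted connected graph. For a block-stable class this becomes, at the level of exponential generating functions, a fixed-point equation of the shape $C^\bullet = x\,\Phi(C^\bullet)$ whose dominant singularity, under the extended sub-criticality hypothesis of Subsection~\ref{subs:subcr}, is a branch point producing the square-root behaviour behind \eqref{eqgn}.

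First I would enrich this decomposition to record maximal independent sets. The natural bookkeeping assigns to the root of every rooted connected component one of a bounded number of \emph{states}: either the root lies in the independent set $J$ (so that none of its neighbours may lie in $J$), or it does not lie in $J$ but is already dominated by a neighbour inside the current block, or it does not lie in $J$ and is not yet dominated, in which case the domination constraint is discharged higher up in the decomposition. Propagating the states through the block decomposition yields a finite system $\mathbf{y}=\mathbf{F}_{\mathcal{I}}(x,\mathbf{y})$ in which $\mathbf{F}_{\mathcal{I}}$ is analytic and assembled from the state-refined generating functions of the $2$-connected blocks of $\mathcal{G}$, which are explicit for trees, cacti and series--parallel graphs. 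Selecting the admissible root states and integrating with respect to the root recovers the connected counting series, and a $\exp$ gives $\mathcal{I}(x)=\sum_n |\mathcal{I}_n|\,x^n/n!$. The treatment of $\mathcal{M}_n$ is entirely parallel: the states of a vertex now record whether it is covered by an edge of the matching $N$ and, if not, whether all of its neighbours are covered (the maximality constraint), producing a second system $\mathbf{y}=\mathbf{F}_{\mathcal{M}}(x,\mathbf{y})$ and the series $\mathcal{M}(x)$.

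Next I would feed $\mathbf{F}_{\mathcal{I}}$ and $\mathbf{F}_{\mathcal{M}}$ into the singularity analysis of Section~\ref{sec:analysis-systems}. One checks that these systems are analytic with non-negative coefficients, that their dependency graphs are strongly connected, and that the aperiodicity carried by $\mathcal{G}$ survives the enrichment; the extended sub-criticality condition then guarantees that the dominant singularity $\rho_1$ (resp.\ $\rho_2$) of the solution is a branch point where a suitable Jacobian determinant vanishes while the underlying block series stay analytic, so the state series admit square-root singular expansions. The final composition step reconstructing $\mathcal{I}(x)$ and $\mathcal{M}(x)$ (integration with respect to the root, followed by $\exp$ of the connected series) upgrades the square-root singularity to a $(1-x/\rho_1)^{3/2}$, resp.\ $(1-x/\rho_2)^{3/2}$, term; transfer theorems then yield $[x^n]\mathcal{I}(x)\sim A_1\,n^{-5/2}\rho_1^{-n}$ and $[x^n]\mathcal{M}(x)\sim A_2\,n^{-5/2}\rho_2^{-n}$ with $A_1,A_2>0$ from the standard singularity-analysis formulas, and multiplication by $n!$ gives the stated estimates.

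The main obstacle, beyond organising the state systems so that the hypotheses of Section~\ref{sec:analysis-systems} genuinely apply, is the strict inequalities $\rho_1<\rho$ and $\rho_2<\rho$. The heuristic is that a typical graph in $\mathcal{G}_n$ carries exponentially many maximal independent sets and maximal matchings, so $\mathcal{I}(x)$ and $\mathcal{M}(x)$ must diverge strictly before $x=\rho$. To make this rigorous I would compare the enriched system with the plain block-decomposition system $C^\bullet=x\,\Phi(C^\bullet)$: the former projects onto the latter but has strictly larger solution components because of the additional state choices, so by a Perron--Frobenius monotonicity argument the Jacobian of $\mathbf{F}_{\mathcal{I}}$ (resp.\ $\mathbf{F}_{\mathcal{M}}$) vanishes strictly inside the disc of convergence of the block series, that is, at some $\rho_1<\rho$ (resp.\ $\rho_2<\rho$); an alternative is to lower-bound $\mathcal{I}(x)$ and $\mathcal{M}(x)$ through the subfamily of paths, whose numbers of maximal independent sets and of maximal matchings are classically known to grow exponentially. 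A last point to verify is that $\rho_1$ and $\rho_2$ are genuine branch points rather than polar singularities inherited from $\Phi$, and this is exactly where the \emph{extended} sub-criticality condition, as opposed to classical sub-criticality, is needed.
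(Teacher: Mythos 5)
Your proposal follows essentially the same route as the paper: the state-refined block decomposition you describe is exactly the content of Lemmas~\ref{lem:blocks_mis} and~\ref{lem:blocks_mm}, and the subsequent analysis (strongly connected positive system, Jacobian/branch-point criterion under the extended sub-criticality condition, square-root singularity upgraded to a $(1-x/\rho_1)^{3/2}$ term after integration and exponentiation, then transfer) is precisely Lemma~\ref{lem:Le6} and the argument of Section~\ref{sec:analysis-systems}. Your closing discussion of why $\rho_1<\rho$ and $\rho_2<\rho$ hold strictly is a point the paper treats only implicitly (and numerically in the applications), so your comparison/monotonicity argument is a welcome addition rather than a deviation.
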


As a direct corollary, we obtain the following asymptotic estimates:

\begin{corollary}\label{cor:enum}
    Let $\mathcal{G}$ be as in Theorem~\ref{thm:main1} and let $AI_n$ be the average number of maximal independent sets in a graph of size $n$ in $\mathcal{G}$ and $AM_n$ be the average number of matchings in a graph of size $n$ in $\mathcal{G}$.
    Then it holds that
    \begin{equation*}
        AI_n = \frac{|\mathcal{I}_n|}{g_n} \sim C \cdot \alpha^{n}
        \quad \mbox{and}\quad
        AM_n = \frac{|\mathcal{M}_n|}{g_n} \sim D \cdot \beta^{n},
    \end{equation*}
    where $C, D, \alpha, \beta$ are positive constants and $\alpha$ and $\beta$ are larger than $1$.
\end{corollary}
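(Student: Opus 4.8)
The plan is to obtain the corollary by simply dividing the asymptotic estimates of Theorem~\ref{thm:main1} by the universal counting formula \eqref{eqgn} for $g_n$. First I would recall that, under the aperiodicity hypothesis, relation \eqref{eqgn} holds for \emph{all} sufficiently large $n$ with $c>0$ (and not merely along a residue class), so that $g_n$ is eventually positive and the quotients $AI_n = |\mathcal{I}_n|/g_n$ and $AM_n = |\mathcal{M}_n|/g_n$ are well defined for $n$ large enough.

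Next I would carry out the division explicitly. From Theorem~\ref{thm:main1} we have $|\mathcal{I}_n| \sim A_1\, n^{-5/2}\rho_1^{-n}\, n!$, and combining this with \eqref{eqgn} gives
\[
    AI_n = \frac{|\mathcal{I}_n|}{g_n} \sim \frac{A_1\, n^{-5/2}\,\rho_1^{-n}\, n!}{c\, n^{-5/2}\,\rho^{-n}\, n!} = \frac{A_1}{c}\left(\frac{\rho}{\rho_1}\right)^{\! n},
\]
since the polynomial factor $n^{-5/2}$ and the factorial $n!$ cancel exactly, leaving a purely exponential main term. Thus the claimed estimate holds with $C = A_1/c$ and $\alpha = \rho/\rho_1$. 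The same computation applied to $|\mathcal{M}_n| \sim A_2\, n^{-5/2}\rho_2^{-n}\, n!$ yields $AM_n \sim (A_2/c)\,(\rho/\rho_2)^n$, so that $D = A_2/c$ and $\beta = \rho/\rho_2$.

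Finally I would verify the positivity and size conditions: $C = A_1/c$ and $D = A_2/c$ are positive because $A_1, A_2 > 0$ and $c > 0$; and $\alpha = \rho/\rho_1 > 1$, $\beta = \rho/\rho_2 > 1$ because Theorem~\ref{thm:main1} guarantees $0 < \rho_1 < \rho$ and $0 < \rho_2 < \rho$. This accounts for every assertion in the statement.

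I do not expect any genuine obstacle here: the only point deserving a word of justification is that the aperiodicity assumption is precisely what licenses using \eqref{eqgn} for all large $n$, which is what makes the ratio of the two asymptotic equivalents clean; the rest is the cancellation of the common subexponential factors $n^{-5/2}\,n!$.
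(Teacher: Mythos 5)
Your proof is correct and is exactly the argument the paper intends: the corollary is presented as a direct consequence of Theorem~\ref{thm:main1} and the estimate \eqref{eqgn}, obtained by dividing the two asymptotic formulas so that the $n^{-5/2}\,n!$ factors cancel, with $\alpha=\rho/\rho_1>1$ and $\beta=\rho/\rho_2>1$ following from $0<\rho_1,\rho_2<\rho$. Nothing further is needed.
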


Our second result concerns the distribution of the respective sizes of random maximal independent sets and matchings.
The following theorem shows that the limiting distribution follows a Central Limit Theorem with linear expectation and variance:

\begin{theorem}\label{thm:main2}
    Let $\mathcal{G}$ be a graph class satisfying proper aperiodicity and extended sub--criticality conditions (as in Theorem~\ref{thm:main1}).
    Furthermore, let $SI_n$ denote the size of a uniformly randomly chosen maximal independent set in $\mathcal{I}_n$ and $SM_n$ the size of a uniformly randomly chosen maximal matching in $\mathcal{M}_n$.
    Then,
    \begin{equation*}
        \begin{array}{lllllll}
            & \mathbb{E}[SI_n]&=&\mu n+ O(1),
            & \mathbb{V}\mathrm{ar}[SI_n]&=&\sigma_1^2 n+O(1), \\
            & \mathbb{E}[SM_n]&=&\lambda n+ O(1),
            & \mathbb{V}\mathrm{ar}[SM_n]&=&\sigma_2^2 n+O(1),
        \end{array}
    \end{equation*}
    for some constants $\mu,\lambda>0$ and $\sigma_1^2,\sigma_2^2 \ge 0$.
    Moreover, if $\sigma_1^2,\sigma_2^2 > 0$ then $SI_n$ and $MI_n$ satisfy a Central Limit Theorem:
    \begin{equation*}
        \frac{SI_n-\mathbb{E}[SI_n]}{\sqrt{\mathbb{V}\mathrm{ar}[SI_n]}} \stackrel{d}{\to} \mathrm{N(0,1)}
        \quad \mbox{and}\quad
        \frac{SM_n-\mathbb{E}[SM_n]}{\sqrt{\mathbb{V}\mathrm{ar}[SM_n]}} \stackrel{d}{\to} \mathrm{N(0,1)}.
    \end{equation*}
\end{theorem}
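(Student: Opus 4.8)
The plan is to set up a multivariate generating function framework and then apply a quasi-power-type central limit theorem from analytic combinatorics. First I would introduce a size parameter: let $u$ mark the size of the maximal independent set (respectively the maximal matching), and consider the bivariate generating function
\begin{equation*}
    I(x,u) = \sum_{n \ge 0} \sum_{(J,G) \in \mathcal{I}_n} u^{|J|} \frac{x^n}{n!},
\end{equation*}
and analogously $M(x,u)$ for matchings. The combinatorial decomposition underlying block-stable classes (graphs decompose into blocks along a tree of cut-vertices, and blocks further decompose according to the $3$-connected structure) must be carried out while keeping track of ``states'' at vertices: whether a vertex is in the independent set, is dominated by an in-vertex, or is still free; and for matchings, whether a vertex is matched or unmatched inside the current part. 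This yields an implicit system of functional equations in the variables $x$, $u$, and the auxiliary series for the rooted/pointed structures, with the weight $u$ appearing only in certain ``atomic'' equations. This is exactly the setup treated in Section~\ref{sec:analysis-systems}: by the extended sub-criticality hypothesis, for $u$ in a complex neighbourhood of $1$ the dominant singularity $\rho_1(u)$ (respectively $\rho_2(u)$) is of square-root type, so $I(x,u)$ has a local expansion
\begin{equation*}
    I(x,u) = g_0(u) - g_1(u)\sqrt{1 - x/\rho_1(u)} + \cdots,
\end{equation*}
with $g_0, g_1, \rho_1$ analytic and nonzero near $u = 1$, and $\rho_1(1) = \rho_1$ the constant from Theorem~\ref{thm:main1}.

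Next I would apply singularity analysis, uniformly in $u$, together with the standard transfer theorem (the quasi-powers framework of Hwang / Drmota), to obtain
\begin{equation*}
    \frac{[x^n] I(x,u)}{[x^n] I(x,1)} = \left( \frac{\rho_1(1)}{\rho_1(u)} \right)^{n} \bigl( 1 + O(1/n) \bigr),
\end{equation*}
uniformly for $u$ near $1$. Hence the probability generating function of $SI_n$ is a perturbed $n$-th power of the analytic function $\rho_1(1)/\rho_1(u)$, which immediately gives $\mathbb{E}[SI_n] = \mu n + O(1)$ and $\mathbb{V}\mathrm{ar}[SI_n] = \sigma_1^2 n + O(1)$ with
\begin{equation*}
    \mu = -\frac{\rho_1'(1)}{\rho_1(1)}, \qquad
    \sigma_1^2 = -\frac{\rho_1''(1)}{\rho_1(1)} - \frac{\rho_1'(1)}{\rho_1(1)} + \left(\frac{\rho_1'(1)}{\rho_1(1)}\right)^{2},
\end{equation*}
and, provided $\sigma_1^2 > 0$, the Gaussian limit law by the quasi-powers theorem. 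The same argument applied verbatim to $M(x,u)$ yields the statements for $SM_n$ with $\lambda$ and $\sigma_2^2$ defined through $\rho_2(u)$. The positivity $\mu, \lambda > 0$ follows because a maximal independent set (resp.\ maximal matching) in a graph on $n$ vertices always has size at least a positive fraction of... well, at least it grows linearly on average in these tree-like classes; more cleanly, $\mu > 0$ because $\rho_1(u) < \rho_1(1)$ cannot hold identically, and a variance/mean-type convexity argument forces $\rho_1'(1) \ne 0$; I would spell this out using that the size cannot be bounded.

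The main obstacle I anticipate is \emph{not} the limit law machinery, which is by now routine once the analytic input is in place, but rather establishing the uniform square-root singularity structure of the perturbed system: one must check that introducing the catalytic variable $u$ near $1$ preserves the smooth implicit-function setup — i.e.\ that the Jacobian of the system remains non-singular, that the dominant singularity is still determined by the same ``subcritical'' branch rather than by a singularity of the inner $3$-connected series, and that $\rho_1(u), \rho_2(u)$ depend analytically on $u$ with no confluence of singularities. This is precisely what the extended sub-criticality condition of Subsection~\ref{subs:subcr} and the system analysis of Section~\ref{sec:analysis-systems} are designed to guarantee, so the proof reduces to verifying that the bivariate systems for $I(x,u)$ and $M(x,u)$ fall under those hypotheses, and then quoting the general central limit theorem for such systems. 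A secondary, more bookkeeping-heavy point is the correct combinatorial translation of ``maximal'' (domination / non-augmentability) into the state variables carried through the block decomposition, since a naive decomposition double-counts or misses boundary conditions at cut-vertices; handling the pointing at cut-vertices so that maximality is a local-plus-consistency condition is the part requiring the most care in practice.
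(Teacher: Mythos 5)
Your proposal follows essentially the same route as the paper: your marking variable $u$ is the paper's $y_0$ (resp.\ $z_1$), the uniform square-root singularity you flag as the main obstacle is exactly what Lemma~\ref{lem:Le6} delivers under the extended sub-criticality conditions, and the conclusion is drawn from the quasi-powers theorem (Theorem~2.35 in Drmota's book), with your formulas for $\mu$ and $\sigma_1^2$ being the standard ones. The only detail you gloss over is that the paper's matching system marks matched \emph{vertices}, so the marking variable appears only with even exponents and one must substitute $z_1 \mapsto z_1^{1/2}$ before applying the theorem, which is where the factor $\tfrac{1}{2}$ in $\lambda$ comes from.
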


Apart from constants $A_1$, $A_2$ (in Theorem \ref{thm:main1}), and $C$, $D$ (in Corollary \ref{cor:enum}), all the other appearing constants can be computed explicitly to any degree of precision.
The following table lists some of them:

\begin{center}
    \begin{tabular}{||l||c|c||c|c||}
        \textit{Family}         & $\alpha$ & $\mu$ & $\beta$ & $\lambda$ \\
        \midrule
        Forests                 & 1.273864 & 0.463922 & 1.313080 & 0.357045 \\
        Cacti graphs            & 1.278323 & 0.431401 & 1.184091 & 0.346734 \\
        Series--parallel graphs  & 1.430394 & 0.269206 & 1.470167 & 0.318924
   \end{tabular}
\end{center}

The constants $\sigma_1^2,\sigma_2^2$ are much more difficult to calculate.
However, in all these concrete cases it can be shown that they are positive (see Section~\ref{S:applications}).

Let us mention that in \cite{MeirMoon88}, Meir and Moon obtained the estimate of Theorem~\ref{thm:main1} and the expectation in Theorem~\ref{thm:main2} for maximal independent sets in Cayley trees, plane trees and binary trees.
Our contribution generalises part of their work, providing a precise limiting distribution for the size of maximal independent sets in Cayley trees.

\medskip

Finally, let us briefly discuss the extremal versions of those problems.
In the literature, one can find two such directions.
One of them, started by Wilf \cite{wilf86} who was motivated by the design of an algorithm to compute the chromatic number, consists in characterising the extremal instances of a given family of graphs containing the maximum number of maximal independent sets (see \cite{griggs88} for connected graphs, \cite{sagan88} and \cite{wloch08} for trees, and \cite{GKSV06} then \cite{SV06} for graphs with a fixed number of cycles), as well as maximum independent sets (see \cite{zito91} and \cite{jou00}).
Furthermore, the maximum number of both maximal matchings \cite{gorska07} and maximum matchings \cite{heuberger11} have been treated.
The other direction consists in bounding the size of a maximum matching in a graph \cite{biedl04}.
However, the problems discussed in this paper seem to be of a different nature.
It is worth noticing that in \cite{biedl04}, the authors also give tight bounds on the size of a maximal matching in 3--connected planar graphs and in graphs with bounded maximum degree.

\paragraph{Structure of the paper.}

Section~\ref{sec:Prel} introduces the necessary background, namely the language of generating functions and how they apply to graph decompositions in terms of their connectivity, as well as the analytic concepts needed in the context of subcritical graph classes.
Later, in Section~\ref{sec:Eq} we obtain two implicit systems of functional equations respectively encoding maximal independent sets and maximal latchings in subcritical graph classes.
We then analyse them separatly using complex analytic tools in Subsection~\ref{sec:analysis-systems}.
In Section~\ref{S:applications}, we then apply our results to the families of Cayley trees, cacti graphs and series--parallel graphs.
The details concerning the computations in cacti graphs and series--parallel graphs are discussed in Appendix~\ref{app:appendix}.
Let us mention that in order to get the constants in the case of series--parallel graphs, we need to slightly reformulate the systems of equations following a grammar introduced in \cite{CFKS08}.
Finally, in Section~\ref{sec:remarks and further research} we discuss possible extensions of our work.

%
%

\section{Preliminaries}\label{sec:Prel}

\subsection{Generating functions}\label{sec:GF}

We follow the terminology from~\cite{fs05}.
A \emph{labelled combinatorial class} is a set $\mathcal{A}$ together with a size measure such that if $n\geq 0$, then the set of elements of size $n$ (denoted by $\mathcal{A}_n$) is finite.
Each element $a\in\mathcal{A}_n$ is built from $n$ atoms, which in our context (graph classes) are vertices with labels in the set $\{1,\dots,n\}$.
We always assume that the combinatorial classes of graphs we consider are stable under graph isomorphism, namely, $a\in \mathcal{A}$ if and only if all graphs $a'$ isomorphic to $a$ are also elements of $\mathcal{A}$.

In enumerative problems, it is often useful to use the exponential generating function (EGF, for short) associated to the labelled class $\mathcal{A}$, namely $A(x) := \sum_{n\geq 0} \frac{|\mathcal{A}_n|}{n!}x^n,\,\, [x^n]A(x) = \frac{|\mathcal{A}_n|}{n!}$.
In our setting, we use the (exponential) variable $x$ to encode vertices, and the (ordinary) variable $y$ to encode edges.
We also use other extra variables to encode different auxiliary parameters.

In this work, we deal with rooted and pointed combinatorial classes.
In particular, we can point the elements of a class $\mathcal{A}$ by distinguishing one of the atoms and discounting it, which means that we reduce the size function by $1$.
Since we assume that our combinatorial class is stable under graph isomorphism, this procedure can be performed by taking the atom with the largest label as the root.
The corresponding new pointed class will be denoted by $\mathcal{A}^\circ$.
Since every element of size $n$ in $\mathcal{A}$ corresponds to a unique element in $\mathcal{A}^\circ$, the term $x^n/n!$ in $A(x)$ is replaced by $x^{n-1}/(n-1)!$ in $A^\circ(x)$, i.e. $A^\circ(x) = A'(x).$

Similarly, we can consider a rooted structure $\mathcal{A}^\bullet$ by distinguishing one of the atoms without discounting it.
Since there are $n$ different ways of choosing an atom (for an element of size $n$), the corresponding term $x^n/n!$ in the generating function is replaced by $n x^n/n! = x^n/(n-1)!$, which leads to the relation $A^\bullet(x) = xA'(x)$.

Finally, we use in this paper the set, sequence and cycle constructions for combinatorial classes (see \cite{fs05} for the precise definitions).
Concerning notation, we write $-\log_{\geq k}(\frac{1}{1 - x}) = \sum_{r\geq k}\frac{x^r}{r} = -\log(\frac{1}{1 - x}) - \sum_{r = 1}^{k - 1}\frac{x^r}{r}$ for the generating function associated to the cycle construction such that that the number of components is greater or equal than $k$.
Similarly, the notation $\exp_{\geq k} = \sum_{r\geq k}\frac{x^r}{r!}$ will be associated to the restricted set construction.

\subsection{Graph decompositions}\label{subsec:graph-decomp}

A \emph{block} of a graph $G$ is a maximal 2-connected subgraph of $G$.
A graph class $\mathcal{G}$ is \emph{block--stable} if:
\begin{enumerate}
    \item it contains the single edge (which we denote by $e$),
    \item it satisfies that a connected graph $G$ belongs to $\mathcal{G}$ if and only if any one of its blocks is in $\mathcal{G}$.
\end{enumerate}
The class $\mathcal{G}$ is also said to be \emph{connected component--stable} when any graph $G$ is in $\mathcal{G}$ if and only if all connected components of $G$ belong to $\mathcal{G}$.
For a graph class $\mathcal{G}$, we denote by $\mathcal{C}$ and $\mathcal{B}$ the families of connected and $2$-connected graphs in $\mathcal{G}$, respectively.
We write $C(x)$ and $B(x)$ their corresponding EGF, and $G(x)$ for the EGF of $\mathcal{G}$.
In particular, if $\mathcal{G}$ is a block--stable and connected component--stable class of graphs, then the following combinatorial decomposition holds:
\begin{equation}\label{eq:graph-decomp}
	\mathcal{G} = \mathrm{Set}(\mathcal{C}),
	\qquad
	\mathcal{C}^{\bullet} = \bullet\times \mathrm{Set}(\mathcal{B}^{\circ}\circ \mathcal{C}^{\bullet}).
\end{equation}
The previous formulas read as follows: first, each graph in $\mathcal{G}$ is defined in terms of a set of connected graph in $\mathcal{C}$.
Secondly, a pointed connected graphs in $\mathcal{C}^{\bullet}$ can be decomposed as the root vertex, and a set of pointed blocks (the ones incident with the root vertex) where we substitute on each vertex a rooted connected graph.
See \cite{BerLaLe98,DrmotaBook,GoulJack83} for details.
These expressions translate into equations of EGFs in the following way:
\begin{equation*}
    G(x) = \exp(C(x)),
    \qquad
    C^{\bullet}(x) = x\exp(B^{\circ}(C^{\bullet}(x))).
\end{equation*}
We refer the readers to \cite{tutte1966} for further results on graph decompositions and connectivity on graphs.

\subsection{Asymptotics for subcritical graph classes}\label{subs:subcr}

We call a block--stable and vertex labelled graph class {\it subcritical} if $\eta B''(\eta) > 1$, where $\eta$ denotes the radius of convergence of $B(x)$.
In particular this condition is satisfied if
\begin{equation*}
	B''(x)\to \infty \qquad \mbox{ as $x\to \eta^-$.}
\end{equation*}
Cayley trees, cacti graphs, outerplanar graphs and series--parallel graphs are subcritical.
The main analytic property of subcritical graph classes is that they satisfy universal asymptotic behaviors, see \cite{Bernasconi2009, DrFuKaKrRu11, panagiotou2015, schben15, subgraphs}.
In our work, we just use the fact that the property $\eta B''(\eta) > 1$ ensures that the solution $C^\bullet(x)$ to the functional equation $C^{\bullet}(x)=x\exp(B^{\circ}(C^{\bullet}(x)))$ has a square--root type singularity at its radius of convergence $\rho$.

Recall that a domain dented at $x=\rho$ is a region of the complex plane of the form $\Delta(\phi,R)=\{x \in \mathbb{C} : x \neq \rho, |x| < R, |\mathrm{Arg}(x - \rho)| > \phi\}$.
With the previous assumptions $C^\bullet(x)$ has a local expansion in a domain dented at $x=\rho$ of the form:
\begin{equation}\label{eqCsing}
    C^\bullet(x) = xC'(x) = \underline{c}_0 + \underline{c}_1 \left({1- \frac x\rho}\right)^{1/2}
    + \underline{c}_2 \left( 1 - \frac x\rho \right)
    + \underline{c}_3 \left( 1 - \frac x\rho \right)^{3/2} + \cdots,
\end{equation}
where $\rho$ is given by the equation $\rho = \underline{c}_0 e^{-B'(\underline{c}_0)}$ and $0 <\underline{c}_0 = C^\bullet(\rho) < \eta$ is given by the equation $\underline{c}_0 B''(\underline{c}_0) = 1$. Furthermore $\underline{c}_1 < 0$.
Note that the singular behavior of $B(x)$ at its radius of convergence $\eta$ is irrelevant for the singular behavior of $C^\bullet(x) = xC'(x)$, we only make use of the (analytic) behavior of $B'(x) = B^\circ(x) $ around $x = \underline{c}_0 < \eta$.

Assuming that the class $\mathcal{G}$ is also connected component--stable, then it follws from \eqref{eqCsing} that $C(x)$ and $G(x) = e^{C(x)}$ have the following singular behavior in a domain dented at $\rho$:
\begin{equation*}
    \begin{array}{cc}
        & C(x) =  c_0 +  c_2 \left( 1 - \frac x\rho \right) +  c_3 \left( 1 - \frac x\rho \right)^{3/2} + \cdots, \\
        & G(x) =  g_0 +  g_2 \left( 1 - \frac x\rho \right) +  g_3 \left( 1 - \frac x\rho \right)^{3/2} + \cdots,
    \end{array}
\end{equation*}
where $c_3$ and $g_3$ are positive.

If we further assume that $B'(x) = B^{\circ}(x)$ is \emph{aperiodic} in the sense that $B^{\circ}(x)$ cannot be represented in the
form $B^{\circ}(x) = x^e b(x^d)$ for some $e\geq 0$ and $d> 1$. Then it follows that $x=\rho$ is the only singularity on the circle of convergence
$|x|=\rho$. This is actually satisfied in all the cases under our study here, and hence for proper positive constants $c',c''$ it follows that (see for instance \cite{fs05}):
\begin{equation*}
    |\mathcal{C}_n| = n!\, [x^n]\,  C(x) \sim c' \cdot n^{-5/2} \rho^{-n} n!
    \quad \mbox{and} \quad
    |\mathcal{G}_n| = n!\, [x^n]\,  C(x) \sim c''\cdot n^{-5/2} \rho^{-n} n!.
\end{equation*}

%
%

\section{Counting in block--stable graph classes}\label{sec:Eq}

In this section, we consider block--stable vertex labelled graph classes and set up functional equations for counting maximal independent subsets and maximal matchings.
We recall our notations for graph classes: $\mathcal{B}$ and $\mathcal{C}$ denote the families of 2--connected blocks and connected graphs, respectively, in a block--stable graph class $\mathcal{G}$.

\subsection{Maximal independent sets in block--stable graph classes}\label{ssec:mis}

A {\em coloured block} is a pair $(I,b)$ consisting of a block $b\in \mathcal{B}$ together with a distinguished independent set $I$ of $b$.
Note that $I$ can be any independent set of $b$, i.e. not necessarily maximal.
Let $B(x,y_0,y_1,y_2)$ be the EGF counting coloured blocks, where
\begin{itemize}
    \item $x$ marks vertices,
    \item $y_0$ counts vertices in $I$,
    \item $y_1$ counts vertices adjacent to vertices in $I$ (namely, vertices at distance one from $I$),
    \item $y_2$ counts the rest of the vertices (vertices at distance at least or equal than two from $I$).
\end{itemize}
Similarly, a \emph{pointed coloured block} is a pair $(I, b^{\circ})$ consisting of a pointed block $b^{\circ}\in \mathcal{B}^{\circ}$ together with a distinguished independent set $I$ of $b$.
Let $B_i := B_i(x,y_0,y_1,y_2)$ be the EGFs counting pointed coloured blocks, where the pointed vertex is at distance {\em exactly} $i$ from $I$ for $i\in \{0, 1\}$, and at distance at least two for $i=2$.
So that we have:
\begin{equation*}
    B_i= \frac{1}{x}\cdot \frac{\partial B}{\partial y_i}, \quad \mbox{for } i\in \{0, 1, 2\}.
\end{equation*}
A {\em coloured graph} $(J,g)$ is a pair consisting of a connected graph $g\in \mathcal{C}$ and of a \emph{maximal} independent set $J$ of $g$.
Let $C:=C(x, y_0, y_1)$ be the EGF counting coloured graphs, where $y_0$ and $y_1$ encode vertices in $J$ and vertices at distance one from $J$, respectively.
For $i\in \{0, 1\}$, let $C_i = C_i(x,y_0,y_1)$ be the EGFs enumerating pointed coloured--graphs, for which the pointed vertex is at distance {\em exactly} $i$ from $J$.
These generating functions are given by
\begin{equation*}\label{eq:C_i_mis}
    C_i = \frac{1}{x}\cdot \frac{\partial C}{\partial y_i}, \quad \mbox{for } i\in \{0,1\}.
\end{equation*}
We finally need an auxiliary class.
A \emph{special pointed coloured graph} is a pair $(J,g^{\circ})$ where $J$ is an independent set of $g$ not including the pointed vertex, such that the pointed vertex is at distance at least two from $J$ and which becomes maximal when adding the pointed vertex to $J$.
Equivalently, a special pointed coloured graph is obtained from a coloured graph pointed at a vertex in $J$ by removing it from $J$.
We denote the corresponding generating function by $C_2(x, y_0,y_1)$.
Observe that given a coloured graph $(J,g)$, the independent set $J$ together with the vertices of $g$ at distance one from $J$ form a partition of the vertices of $g$.
Hence, the following equalities hold:
\begin{equation*}\label{eq:con-mis}
    \frac{\partial C}{\partial x} = \frac{y_0}{x}\frac{\partial C}{\partial y_0} + \frac{y_1}{x}\frac{\partial C}{\partial y_1} = y_0 C_0 +y_1 C_1.
\end{equation*}
We also have that $G(x,y_0,y_1) = \exp( C(x,y_0,y_1))$, where $G(x,y_0,y_1)$ denotes the corresponding generating function of coloured graphs in $\mathcal{G}$.

The following lemma describes connected structures in terms of their block--decomposition (see Figure \ref{fig:mis_decompositon} for an example).
Thus if we know $B(x,y_0,y_1,y_2)$ (or just $B_i(x,y_0,y_1,y_2)$ for each $i\in \{0,1,2\}$), then we can determine $\frac{\partial C}{\partial x}(x,y_0,y_1)$ and consequently $C(x,y_0,y_1)$ and $G(x,y_0,y_1)$.

\begin{figure}[h!]
\centering
    \includegraphics[scale=1.29]{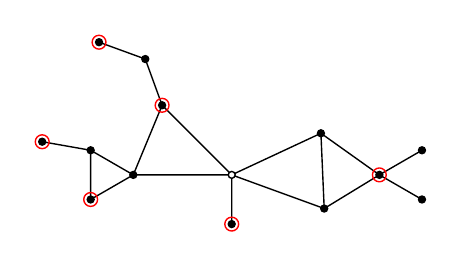}
    \includegraphics[scale=1.29]{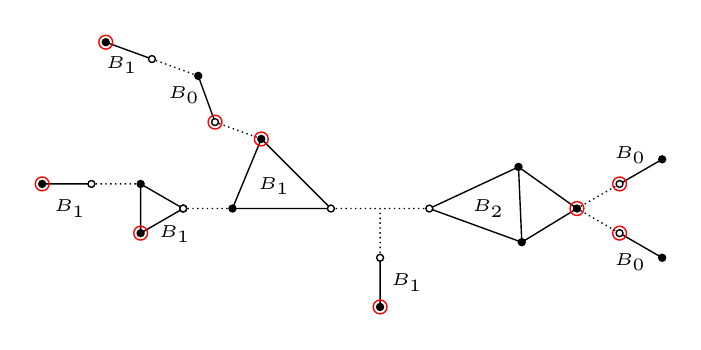}
    \caption{
        Left is a pointed coloured graph with independent set $J$. Vertices of $J$ are circled in red, the pointed vertex is painted as a white circle.
        Right is its block--decomposition.
        Pointed vertices are coloured (again) in white.
    }
    \label{fig:mis_decompositon}
\end{figure}

\begin{lemma}\label{lem:blocks_mis}
    With the above notations, the following system of equations holds:
    \begin{equation}\label{eqLe4syst}
        \begin{array}{ll}
            & C_0 = \exp(B_0(x, y_0C_0, y_1(C_1 + C_2), y_1C_1)), \\
            & C_1 = \left(\exp(B_1(x, y_0C_0, y_1(C_1 + C_2), y_1C_1))-1\right)\cdot C_2, \\
            & C_2 = \exp(B_2(x, y_0C_0, y_1(C_1 + C_2), y_1C_1)).
        \end{array}
    \end{equation}
\end{lemma}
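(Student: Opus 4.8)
The plan is to interpret each of the three equations in~\eqref{eqLe4syst} combinatorially via the block--decomposition of a pointed coloured graph, distinguishing cases according to the distance of the pointed (root) vertex to the maximal independent set $J$. Recall from~\eqref{eq:graph-decomp} that a pointed connected graph decomposes as a root vertex together with a set of pointed blocks incident to the root, where on each non-root vertex of each block we substitute a further pointed connected graph. I would carry out the substitution $y$-variable bookkeeping first: if a block carries its own independent set $I$ with vertices at distances $0,1,\ge 2$ from $I$ (marked by $y_0,y_1,y_2$ in $B$), then gluing a pointed coloured connected graph at a non-root vertex $v$ of the block must produce a \emph{globally} maximal independent set, and the resulting distance of $v$ to the global set $J$ is the minimum of its distance inside the block and its distance inside the glued graph. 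This is exactly what the three substituted arguments $(x,\ y_0C_0,\ y_1(C_1+C_2),\ y_1C_1)$ encode, and spelling this out is the conceptual heart of the argument.

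First I would treat the case $C_2$ (root at distance $\ge 2$ from $J$, i.e.\ a special pointed coloured graph that becomes maximal upon adding the root). At such a root we attach a possibly empty set of pointed blocks, each pointed at a vertex that is itself at distance $\ge 2$ from the block's independent set $I$ — hence the outer generating function $\exp(B_2(\cdots))$. For each non-root vertex $v$ of such a block there are three possibilities for the coloured graph glued there: (i) $v\in J$, contributing $y_0C_0$; (ii) $v$ is at distance exactly $1$ from $J$, which can happen either because $v$ is a ``distance-$1$'' vertex already inside the block — but here the block only has vertices at distance $\ge 2$ from $I$, so in fact $v$'s distance-$1$ status must be witnessed by the glued graph, contributing $y_1(C_1+C_2)$, where the $C_2$ term covers the case that the glued graph alone would leave $v$ at distance $\ge 2$ but some neighbour inside makes it distance $1$; wait — more carefully, the $C_1+C_2$ reflects that $v$ may be at distance $1$ in the glued graph, or at distance $\ge 2$ in the glued graph but forced to distance $1$ by a neighbour; and (iii) $v$ at distance $1$ witnessed by the block structure, contributing $y_1C_1$. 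I would make this case analysis rigorous using the distance-minimum principle and the observation (already noted in the excerpt) that $J$ together with the distance-$1$ vertices partitions $V(g)$, so no vertex can remain at distance $\ge 2$ in the final structure.

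Then I would do the $C_0$ case (root in $J$): the root contributes $y_0$ implicitly through the relation $C_0=\frac1x\partial C/\partial y_0$, its incident blocks must have their pointed vertex \emph{in} $I$ (hence $B_0$), and the same three substitution arguments govern the glued graphs at the other block vertices — the only subtlety being that a neighbour of the root that is glued as a $C_2$-graph is pushed to distance $1$ by the root, which is why the second argument is $y_1(C_1+C_2)$ rather than $y_1 C_1$. The $C_1$ case (root at distance exactly $1$) is the one with the extra factor $\bigl(\exp(B_1(\cdots))-1\bigr)\cdot C_2$: the root must have at least one neighbour in $J$, which forces at least one incident block whose pointed vertex sits at distance $1$ from that block's $I$, giving the $\exp(B_1)-1$ factor; but removing the constraint, the root ``wants'' to be a distance-$\ge2$ vertex, so the whole configuration, minus the witnessing block, is a special pointed coloured graph — this is the origin of the $C_2$ multiplier, and this factorisation is the step I expect to require the most care to justify cleanly (one must argue that the block-witness and the ``rest'' decouple, i.e.\ that exactly one designated block provides the distance-$1$ witness while the remaining structure is an arbitrary $C_2$-object rooted at the same vertex). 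The remaining details are a routine transfer of these bijective descriptions into the EGF identities using $B_i=\frac1x\partial B/\partial y_i$, $C_i=\frac1x\partial C/\partial y_i$, and the exponential formula $G=\exp(C)$ for sets of connected components.
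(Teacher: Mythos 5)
Your proposal is correct and follows essentially the same route as the paper: decompose the pointed coloured graph into the set of pointed coloured blocks at the root, justify the three substitutions $y_0\mapsto y_0C_0$, $y_1\mapsto y_1(C_1+C_2)$, $y_2\mapsto y_1C_1$ by the distance-minimum/maximality argument, and obtain the $C_1$ equation as $\exp_{\geq 1}(B_1(\cdots))\cdot\exp(B_2(\cdots))=(\exp(B_1(\cdots))-1)\cdot C_2$. The only imprecision is your phrase ``exactly one designated block provides the distance-$1$ witness'': the correct reading (and the one your formula actually encodes) is a \emph{non-empty set} of $B_1$-type blocks times the set of $B_2$-type blocks, the latter being exactly a $C_2$-object.
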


\begin{proof}
    The proof is based on a refinement of Equation~\eqref{eq:graph-decomp} expressing the EGF of a family of pointed connected graphs in terms of a set of pointed blocks in which we substitute each vertex by a pointed connected graph.
    In what follows, we decompose our initial pointed coloured graph into a set of pointed coloured blocks, and we substitute each of their vertices (of the three different types) by different families of pointed coloured graphs.

    Let us start by proving that the implicit equation defining $C_0$ holds.
    Let $(I, g^{\circ})$ be a pointed coloured graph whose pointed vertex is in $I$.
    We decompose $g^\circ$ according to the blocks that are incident with its pointed vertex.
    Observe that the pointed vertex of $g^{\circ}$ determines a set of pointed coloured blocks $(J_i, b_i^{\circ})$ (with $i=1,\dots, k$ for a certain $k$): each $b_i^{\circ}$ is a maximal 2-connected graph incident with the pointed vertex of $g^\circ$, and $J_i$ is the restriction of $I$ into $J_i$.
    Observe that $J_i$ is an independent set but \emph{not necessarily maximal}.
    Without loss of generality, let us now fix $j\in \{1,\dots, k\}$ and analyse the structure of the pair $(J_j,b_j^{\circ})$ with respect to the whole pair $(I, g^{\circ})$.
    First, to every vertex of $b_j^{\circ}$ in $J_j$ must be attached a coloured graph $(L, h^{\circ})$ whose root is in $L$. That is, a coloured graph counted by $C_0$.
    In terms of generating functions, this translates into the substitution of $y_0$ by $y_0C_0$.
    Second, to each vertex of $b_j^{\circ}$ at distance one from $J_j$, the root of the pointed coloured graph $(L, h^{\circ})$ attached to it can be at distances either one or more from $L$.
    This translates into the substitution of $y_1$ by $y_1(C_1 + C_2)$.
    Finally, if a vertex of $b_j^{\circ}$ is at distance at least two from $J_j$, then the root of the coloured graph $(L, h^{\circ})$ attached to it must be at distance one from $L$, as we need to extend the independent set to one that is maximal.
    This translates into the substitution of $y_2$ by $y_1C_1$ and the first equation of~\eqref{eqLe4syst} holds.

    The implicit equation satisfied by $C_2$ is obtained following the exact same arguments as for $C_0$.

    Let us finally discuss the equation for $C_1$.
    Assume that $(I,g^{\circ})$ is a pointed coloured graph and that $(J_i, b_i^{\circ})$ (for $i=1,\dots,k$) are the pointed coloured blocks incident with the pointed vertex of $g^{\circ}$.
    In particular, for each $i\in \{1,\ldots, k\}$ the pointed vertex of $b_i^{\circ}$ is at distance either one or at least two from $J_i$.
    Nevertheless, observe that there exists at least one of the pointed-blocks $(J_j, b_j^{\circ})$ whose pointed vertex is at distance one from $J_j$.
    This concludes the proof, as it gives us that:
    \begin{align*}
        C_1 & = \exp_{\geq 1}(B_1(x, y_0C_0, y_1(C_1 + C_2), y_1C_1))\cdot \exp(B_2(x, y_0C_0, y_1(C_1 + C_2), y_1C_1)) \\
            & = \left(\exp(B_1(x, y_0C_0, y_1(C_1 + C_2), y_1C_1))-1\right)\cdot C_2,
    \end{align*}
    where the last equality is obtained using the equation for $C_2$.
\end{proof}

\subsection{Maximal matchings in block--stable classes of graphs}\label{subsec:max-match-block}

In this subsection we deal with the case of maximal matchings.
Most of the definitions and concepts are the natural analogues of the ones developed in the case of maximal independent sets.
Hence, we will skip unnecessary repetitions.

A \emph{matched block} is a triple $(I,M,b)$ with a block $b\in \mathcal{B}$, a matching $M$ in $b$, and an independent set $I$ of $b$, and where no element of $I$ is incident to an edge in $M$.
In other words, we split the set of vertices of $b$ in three disjoint subsets: vertices in $I$, matched vertices, and the rest.
The former vertices will be called \emph{marginal} vertices.

A \emph{pointed} matched block is a triple $(I,M,b^{\circ})$, where $b^{\circ}\in \mathcal{B}^{\circ}$ and $M$ and $I$ are respectively a matching and an independent set of $b$, and where again no element of $I$ is incident to an edge in $M$.
Let $\bar B(x, z_0, z_1, z_2)$ be the EGF counting matched blocks, where the variable $x$ marks vertices, and $z_0$, $z_1$ and $z_2$ mark vertices in $I$, vertices matched by $M$ and marginal vertices respectively.
Observe that the exponent of the variable $z_1$ in $B(x, z_0, z_1, z_2)$ is always an even number.
This is due to the fact that $z_1$ counts pairs of vertices.
This is also true for the forthcoming EGF.
For $i\in \{0, 1, 2\}$ let $\bar B_i = \bar B_i(x, z_0, z_1, z_2)$ be the generating function counting pointed matched blocks where the pointed vertex is either in $I$, is incident with $M$ or pointed at a marginal vertex.
In particular,
\begin{equation*}
    \bar B_i = \frac{1}{x}\cdot \frac{\partial \bar B}{\partial z_i}, \quad \mbox{for } i\in\{0,1,2\}.
\end{equation*}

A \emph{matched graph} is a triple $(I,M,g)$ consisting of a connected graph $g$ in $\mathcal{C}\subseteq\mathcal{G}$, a matching $M$ of $g$, and an independent set $I$ not incident with $M$.
Similarly, a \emph{pointed} matched graph is a triple $(M,I,g^{\circ})$ where now $g^{\circ}$ is a pointed graph.
Let $\bar C(x, z_0, z_1, z_2)$ be the exponential generating function counting matched graphs, where $x$, $z_0$, $z_1$ and $z_2$ respectively mark vertices, vertices incident with $I$, vertices incident with $M$, and marginal vertices.
Notice that when $z_2 = 0$, $\bar C := \bar C(x,z_0,z_1) = \bar C(x,z_0,z_1,0)$ encodes matched graphs where $M$ is a maximal matching.
For each $i\in \{0, 1, 2\}$, let us define the following generating function
\begin{equation*}
    \bar C_i=\bar C_i(x, z_0, z_1) = \frac{1}{x}\cdot \frac{\partial \bar C}{\partial z_i}(x, z_0, z_1, 0).
\end{equation*}
Observe then that $\bar C_0$ counts pointed matched graphs, where the matching is maximal and the pointed vertex belongs to the independent set, $\bar C_1$ counts pointed matched graphs, where the matching is maximal and the pointed vertex belongs to the matching, whereas $\bar C_2$ counts pointed matched graphs, where the matching is not necessarily maximal and the pointed vertex is marginal.
In the latter case, the matching is maximal except for possibly the pointed vertex, which might be unmatched and adjacent to other unmatched vertices.
In particular, this implies that the generating function of pairs of connected graphs endowed with a maximal matchings is given by:
\begin{equation*}\label{eqCxrep}
    \frac{\partial{\bar C}}{{\partial x}} = z_0\bar C_0 + z_1\bar C_1.
\end{equation*}
The main idea behind the use of these generating functions is that vertices in the independent set $I$ play the role of vertices that will not be matched in the block decomposition.
In particular, we exploit the knowledge of $I$ in order to ensure that the matching cannot be extended.
On the other hand, the set of marginal vertices will be matched by attached pointed matched graphs in the decomposition.
See Figure \ref{fig:match_decompositon} for an example of the decomposition of a pointed matched graph in terms of its blocks and the different types of vertices.

The following lemma relates all the previous generating functions.
Note that the generating functions $\bar C(x,z_0,z_1,0)$ and $\bar G(x,z_0,z_1) = \exp(\bar C(x,z_0,z_1))$ directly follow from the solution of the next system.

\begin{figure}[h!]
	\centering
    \includegraphics[scale=1.1]{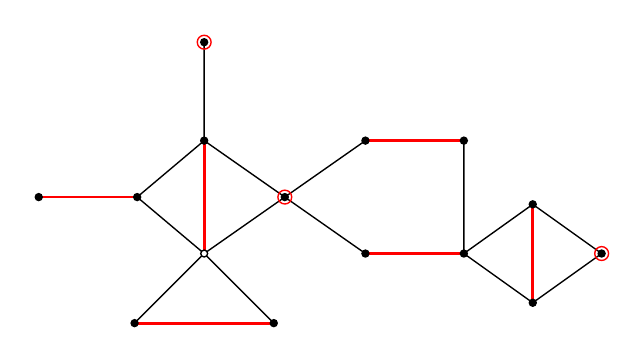}
    \includegraphics[scale=1.1]{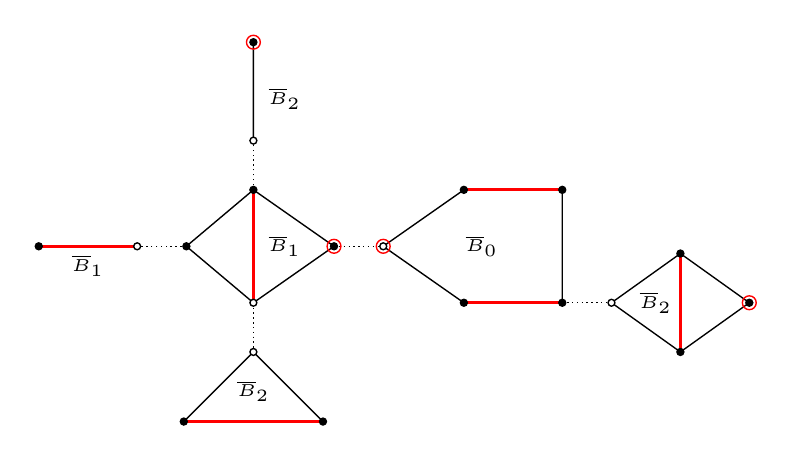}
    \caption{
        On the left there is a connected graph pointed at a vertex (in white) incident to a maximal matching (induced by the edges in red), which is counted by the EGF $\bar C_1(x,z_0,z_1,z_2)$.
        The vertices belonging to the independent set are circled in red.
        Right it is drawn its decomposition into rooted blocks.
    }
    \label{fig:match_decompositon}
\end{figure}

\begin{lemma}\label{lem:blocks_mm}
	The following equalities hold:
	\begin{equation}\label{eqLe5syst}
    	\begin{array}{ll}
        	\bar C_0 &= \exp(\bar B_0(x, z_0\bar C_0, z_1\bar C_2, z_1\bar C_1)), \\
        	\bar C_1 &= \bar C_2\,\bar B_1(x, z_0\bar C_0, z_1\bar C_2, z_1\bar C_1), \\
        	\bar C_2 &= \exp(\bar B_2(x, z_0\bar C_0, z_1\bar C_2, z_1\bar C_1)).
    	\end{array}
	\end{equation}
\end{lemma}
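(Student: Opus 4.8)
The plan is to mimic the proof of Lemma~\ref{lem:blocks_mis}, carrying out the block decomposition of a pointed matched graph while tracking the status (in $I$, matched by $M$, or marginal) of each vertex of each block incident to the root. As in the independent-set case, the combinatorial skeleton is the refinement of~\eqref{eq:graph-decomp}: a pointed matched graph $g^\circ$ decomposes into the set of blocks $b_1^\circ,\dots,b_k^\circ$ incident to the distinguished vertex, and to every non-root vertex of each $b_i^\circ$ we attach a further pointed matched graph. The whole content of the proof is to determine, for a block-vertex of each of the three types, which \emph{type of rooted attachment} is legal, and hence which substitution $z_i \mapsto z_i \bar C_{j}$ must be made.

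First I would establish the equation for $\bar C_0$. If the root of $g^\circ$ lies in $I$, then each incident block $b_i^\circ$ is rooted at a vertex of $I_i := I\cap V(b_i)$, so each $b_i^\circ$ is counted by $\bar B_0$; there is no lower bound on $k$ (the root need not be matched in any block), which yields the plain $\exp$. Now the substitutions: a vertex of $b_i^\circ$ lying in $I_i$ may only receive a pointed matched graph whose root is again in its independent set, giving $z_0\mapsto z_0\bar C_0$; a \emph{matched} vertex of $b_i^\circ$ is already saturated inside the block, so the graph hanging from it must carry a \emph{maximal} matching with the root \emph{not} matched and not in the independent set — that is precisely the role of $\bar C_2$ evaluated appropriately, except the bookkeeping is such that this attachment is counted by $\bar C_1$ (the root contributes to a matched pair via the block edge), giving $z_1\mapsto z_1\bar C_2$ for one coordinate and $z_1\mapsto z_1\bar C_1$ for the marginal coordinate; and a \emph{marginal} vertex of $b_i^\circ$ must be saturated from outside, so the attached graph has its root matched, i.e. is counted by $\bar C_1$, and since marginal vertices of the block are the ``$z_2$'' slot, this is the substitution $z_1\bar C_1$ in the third argument of $\bar B_0$. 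Matching these up carefully against the displayed definitions of $\bar B_0,\bar B_1,\bar B_2$ gives the first equation of~\eqref{eqLe5syst}; the equation for $\bar C_2$ follows by the identical argument, the only difference being that the root itself is marginal (and possibly unmatched), which does not change the set of legal attachments and again imposes no lower bound on $k$, hence the $\exp$.

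Finally I would treat $\bar C_1$. Here the root of $g^\circ$ is matched by $M$, and the matching edge at the root lies in exactly one incident block, say $b_j^\circ$; in that block the root is matched, so $b_j^\circ$ is counted by $\bar B_1$, while in every \emph{other} incident block the root is a marginal vertex (it cannot be in $I$, being matched, and it is not matched inside those blocks), so each of them is counted by $\bar B_2$. Thus the configuration at the root is: exactly one block of type $\bar B_1$, together with an arbitrary set of blocks of type $\bar B_2$ — giving $\bar C_1 = \bar C_2\cdot \bar B_1(\dots)$ after the same three substitutions as above, since $\exp(\bar B_2(\dots)) = \bar C_2$ by the third equation. (Contrary to the independent-set case there is no $\exp_{\ge 1}$ factor: the matching edge at the root is unique, so the distinguished $\bar B_1$-block is a single block, not a nonempty set of them.) I expect the main obstacle to be purely notational: verifying that the three substitution patterns $z_0\bar C_0$, $z_1\bar C_2$, $z_1\bar C_1$ are \emph{consistent} across all three equations, i.e.\ that ``being attached to a matched block-vertex'' and ``being attached to a marginal block-vertex'' demand exactly the classes $\bar C_2$ and $\bar C_1$ respectively, and checking the parity remark (the exponent of $z_1$ stays even because $\bar C_1$ and $\bar C_2$ themselves preserve this and the block edges come in the saturated pairs already accounted for) so that the maximality of $M$ in $\bar C(x,z_0,z_1,0)$ is genuinely enforced. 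Once the dictionary between block-vertex types and attachable graph classes is pinned down, the three equations read off immediately.
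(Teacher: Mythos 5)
Your proposal is correct and follows essentially the same route as the paper: decompose the pointed matched graph into the blocks at the root, classify each such block by the root's status (giving $\bar B_0$, a unique $\bar B_1$-block times a set of $\bar B_2$-blocks, or $\bar B_2$), and read off the substitutions $z_0\mapsto z_0\bar C_0$, $z_1\mapsto z_1\bar C_2$, $z_2\mapsto z_1\bar C_1$ from which attachment class is legal at each block-vertex type. The only blemish is the muddled parenthetical about the $\bar C_2$-attachment at a matched block-vertex being ``counted by $\bar C_1$'' --- the attachment there is $\bar C_2$ (marked $z_1$ because the vertex is saturated by the block's matching edge), and your displayed substitutions already say this correctly.
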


\begin{proof}
    We proceed similarly to the proof of Lemma \ref{lem:blocks_mis}.
	To that end, let $(I,M,g^{\circ})$ be a pointed matched graph, with pointed vertex $v$.

	Suppose first that $v\in I$, i.e. the case counted by $\bar C_0$.
	It is the pointed vertex of a set of adjacent pointed blocks $(I_j,M_j,b^{\circ}_j)$, in which $v\in I_j$, and is not adjacent to any other pointed block.
	This means that all the pointed blocks adjacent to $v$ are counted by $\bar B_0$.
	Furthermore, to each vertex of those blocks incident to $I$ is attached a matched graph $(I',M',h^{\circ})$ pointed at a vertex incident to $I'$.
	Similarly, to each vertex incident to $M$ is attached a matched graph pointed at a marginal vertex, as else two edges of the resulting matching will be incident.
	And to each marginal vertex is attached a matched graph $(I'',M'',f^{\circ})$ pointed at a vertex incident to $M''$. 
	This translates into the substitutions of the variables $z_0$, $z_1$ and $z_2$ in $\bar B_0$ by $z_0\bar C_0$, $z_1\bar C_2$ and $z_2\bar C_1$, respectively.
	The first equation then follows.
	
	Suppose next that the vertex $v$ is matched, i.e. the case counted by $\bar C_1$.
	Then the edge of $M$ incident with $v$ must belong to a single pointed block, whose pointed vertex (identified with $v$) is incident to an edge of the respective matching.
	So that one can only attach this particular block to $v$, together with any connected matched graph pointed at a marginal vertex.
	This gives the second equation in~\eqref{eqLe5syst}.
	Finally, the equation for $\bar C_2$ is obtained following similar arguments as for $\bar C_0$.
\end{proof}

%
%

\section{Asymptotic analysis. Proofs of Theorems \ref{thm:main1} and \ref{thm:main2}}\label{sec:analysis-systems}

In this section, we study the analytic properties of the solutions of Systems~\eqref{eqLe4syst} and~\eqref{eqLe5syst}, provided that the functions $B_0$, $B_1$ and $B_2$ (resp. $\bar B_0$, $\bar B_1$ and $\bar B_2$) all behave in a \textit{proper way} that is similar to the behaviour of $B(x)$ (resp. $\bar B(x)$) in the case of block--stable subcritical graph classes.
Under the hypotheses we assume, it will be rather direct to prove both Theorems~\ref{thm:main1} and~\ref{thm:main2}.
We will fully discuss the case of maximal independent sets.
A similar argument would apply for the study of maximal matchings.
We will only explain the differences at the end of the section.

First, note that the functions $B_i(x,y_0,y_1,y_2)$ are actually functions in three variables since a monomial $x^n y_0^{k_0} y_1^{k_1} y_2^{k_2}$ can only appear if $k_0 + k_1 + k_2 = n$, that is, we have $B_i(x,y_0,y_1,y_2) = B_i(1,xy_0,xy_1,xy_2)$ or equivalently $B_i(x,y_0,y_1,y_2) = B_i(xy_2,y_0/y_2,y_1/y_2,1)$.
However, it is more convenient to work with all four variables $x$, $y_0$, $y_1$, and $y_2$.

Now, if $y_0$, $y_1$ and $y_2$ are positive real numbers, then the function $x\mapsto B(x, y_0, y_1, y_2)$ is a power series with non--negative coefficients.
Hence the radius of convergence of this function coincides with its dominant singularity in $x$.
We will denote this radius of convergence by $R(y_0,y_1,y_2)$.
Similarly, for the three solution functions $C_0(x,y_0,y_1)$, $C_1(x,y_0,y_1)$ and $C_2(x,y_0,y_1)$ of the system~\eqref{eqLe4syst}, we denote by $\rho_i(y_0,y_1)$ ($i = 0,1,2$) the radius of convergence of $C_i(x,y_0,y_1)$ with respect to $x$ when $y_0$ and $y_1$ are (fixed) positive real numbers.

We now introduce the so-called \textbf{extended subcriticallity conditions}:

\begin{itemize}
	\item[]
	\begin{itemize}
	\item[(C1)] \textit{For any triple $(y_0,y_1,y_2)$ of positive real numbers, the function $R(y_0,y_1,y_2)$ can be extended analytically for a sufficiently small neighbourhood.}
\end{itemize}
\end{itemize}
\textit{For all positive real numbers $y_0$, $y_1$ and $y_2$, there are  $i_1,i_2\in\{0,1,2\}$ with}:
\begin{itemize}
	\item[]
	\begin{inparaenum}
	\item[(C2)] $\ds\frac{\partial^2 B}{\partial y_{i_1}^2}(0,y_0,y_1,y_2) = 0$
     	\qquad \mbox{and} \qquad
    \item[(C3)] $\ds\lim_{x \to R(y_0,y_1,y_2)^-} \frac{\partial^2 B}{\partial y_{i_1}^2}(x,y_0,y_1,y_2) = \infty$.
	\end{inparaenum}
\end{itemize}

We define similarly $\bar R(z_0,z_1,z_2)$ for the radius of convergence of the function $x\mapsto \bar B(x,z_0,z_1,z_2)$.
And introduce three analogue analytic conditions by replacing $B$ with $\bar B$, $R$ with $\bar R$ and $y_i$ with $z_i$ for $i=0,1,2$ in (C1) -- (C3).

We say that a graph family is \emph{extended subcritical} with respect to the parameters \emph{maximal independent sets} and
\emph{maximal matchings} if the above three conditions (C1) -- (C3) hold.
This is a quite technical condition and seems to be difficult to check (even more difficult than the condition $\eta B''(\eta) > 1$ for subcritical graph families).
However, we will see that in our applications the functions $B_i(x, y_0, y_1, y_2)$ (and $\bar B_i(x, y_0, y_1, y_2)$) are either polynomials or functions with a dominating squareroot singularity of the following form:
\begin{equation*}
	B_i(x, y_0, y_1, y_2) = G_i(x, y_0, y_1, y_2) - H_i(x, y_0, y_1, y_2)\sqrt{ 1- \frac x{R(y_0,y_1,y_2)}},
\end{equation*}
with $H_i(x, y_0, y_1, y_2)\ne 0$.
These kind of singularities appear naturally if -- as in our cases -- the functions $B_i(x,y_0,y_1,y_2)$ satisfy a positive and strongly connected system of equation (see~\cite[Theorem 2.33]{DrmotaBook}).
So that if we have a squareroot singularity of that form, then the rather restrictive condition (C3) follows immediately.

The next lemma introduces the most technical part of the proof, providing a singular expansion of each of the functions $\{C_i(x,y_1,y_2)\}$ for $i=0,1,2$ under conditions (C1), (C2) and (C3).

\begin{lemma}\label{lem:Le6}
	Assume that the extended subcriticallity conditions are satisfied.
	Then the solutions $C_0$, $C_1$ and $C_2$ of system \eqref{eqLe4syst} have the property that the functions $\rho_i(y_0,y_1)$ ($i = 0,1,2$) coincide and extend to an analytic function $\rho_1(y_0,y_1)$, for a sufficiently small neighbourhood around the positive real numbers.
	Moreover, the dominant singularity is of square--root type at $\rho_1(y_0,y_1)$ and we have a local representation of the form
	\begin{equation}\label{eqlem:Le62}
    	C_i(x,y_0,y_1) = g_{i}(x,y_0,y_1) - h_{i}(x,y_0,y_1)\left({ 1- \frac x{\rho_1(y_0,y_1)} }\right)^{1/2},
	\end{equation}
	where the functions $g_{i}(x,y_0,y_1)$ and $h_{i}(x,y_0,y_1)$ are analytic for $x$ close to $\rho_1(y_0,y_1)$ and $y_0,y_1$ close to the positive real axis.
	Furthermore $h_{i}(\rho_1(y_0,y_1),y_0,y_1) > 0$, for positive reals $y_0$ and $y_1$.
\end{lemma}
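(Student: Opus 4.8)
The plan is to treat System~\eqref{eqLe4syst} as a positive system of functional equations in the unknowns $C_0, C_1, C_2$ and to apply the standard machinery for square--root singularities of such systems (in the spirit of \cite[Theorem 2.33]{DrmotaBook}), but with the twist that the right--hand sides themselves contain the functions $B_i(x,\cdot,\cdot,\cdot)$, which already carry a square--root singularity in $x$ at $x = R(y_0,y_1,y_2)$ (by condition (C3) and the discussion following it). So first I would reduce to a genuine polynomial-type system. Introduce the auxiliary variable $u = \bigl(1 - x/R(y_0 C_0, y_1(C_1+C_2), y_1 C_1)\bigr)^{1/2}$ and, using the assumed expansion $B_i = G_i - H_i\,u$ (or $B_i$ polynomial, the degenerate easy case), rewrite~\eqref{eqLe4syst} as an analytic system $F(C_0,C_1,C_2,u;x,y_0,y_1) = 0$ in which all entries are analytic in a neighbourhood of the relevant positive real point. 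The point here is that, because of (C2), the second $y_{i_1}$--derivative of $B$ vanishes at $x=0$, so $H_{i_1}$ does not vanish identically and the square root in $B_{i_1}$ is genuinely present; combined with (C3) this forces the system to be \emph{not} linear in the $C_i$'s in the critical direction and hence to have the branch-point behaviour we want.

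Second, I would run the usual fixed-point / implicit-function argument. For $y_0, y_1$ fixed positive reals, the map defined by the right-hand side of~\eqref{eqLe4syst} is a contraction on a suitable space of power series with non-negative coefficients, so a unique analytic solution $(C_0, C_1, C_2)$ exists for $x$ in a neighbourhood of $0$, with non-negative coefficients; hence the dominant singularity of each $C_i$ coincides with its radius of convergence $\rho_i(y_0,y_1)$. To see that the three radii coincide, note that the system is \emph{strongly connected}: $C_0$ depends on $C_1$ and $C_2$ through the substitution $y_1(C_1+C_2)$ and $y_1 C_1$ inside $B_0$, $C_1$ depends on $C_2$ and on the $B_1$-block (hence on $C_0, C_1, C_2$), and $C_2$ likewise — so a singularity of any one propagates to the others, giving a common value $\rho_1(y_0,y_1) := \rho_0 = \rho_1 = \rho_2$. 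Then the singularity is located where the Jacobian of $F$ with respect to $(C_0,C_1,C_2,u)$ becomes singular; along the positive real axis one checks, exactly as in the subcritical case, that this happens \emph{before} $x$ reaches $R(y_0 C_0, y_1(C_1+C_2),y_1 C_1)$ when the $B_i$'s are polynomial, and \emph{exactly at} that value when they are not — in the latter case the square root already present in $B_{i_1}$ is what triggers the branch point, which is why (C2)+(C3) are exactly the right hypotheses. In both cases $C_i$ admits an expansion~\eqref{eqlem:Le62} with $h_i(\rho_1,y_0,y_1) > 0$, the positivity of $h_i$ following from the positivity of the system (the coefficients of $C_i$ are non-negative and the exponent $1/2$ forces the leading singular coefficient to be negative, so $-h_i < 0$).

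Third, analyticity of $\rho_1(y_0,y_1)$ and of $g_i, h_i$ in $(y_0,y_1)$ near the positive reals follows from the implicit function theorem applied to the \emph{characteristic system} — the pair consisting of $F=0$ together with the vanishing of the relevant Jacobian determinant — at the critical point, using (C1) to guarantee that $R$ itself extends analytically so that the composed functions $R(y_0 C_0, \ldots)$ are analytic there. The transfer from the local expansion to the statement is immediate. The main obstacle I expect is \emph{the bookkeeping of which $B_i$ carries the square root and checking that the composed square-root argument $R(y_0 C_0, y_1(C_1+C_2), y_1C_1)$ does not accidentally vanish or acquire its own singularity before the system-intrinsic branch point} — i.e. verifying that the "outer" singularity coming from $R(\cdot)$ and the "inner" singularity coming from the Jacobian of the $C$-system are correctly ordered, so that the dominant one is of the clean square-root type~\eqref{eqlem:Le62}. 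This is a case analysis (polynomial $B_i$ versus square-root $B_i$, and within the latter, which index $i_1$ realises (C2)–(C3)), and it is where the technical conditions (C1)–(C3) are used in an essential way.
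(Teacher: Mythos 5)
Your overall framework is the right one (positive, strongly connected system; common radius by strong connectedness; square--root singularity via \cite[Theorem 2.33]{DrmotaBook}), but the key mechanism in your second paragraph is inverted, and this is exactly the step the extended subcriticality conditions are there to control. You claim that when the $B_i$ are not polynomial the branch point of the $C$--system occurs \emph{exactly at} the point where the composed argument reaches the singularity $R(\cdot)$ of $B$, and that ``the square root already present in $B_{i_1}$ is what triggers the branch point.'' The paper's argument (and the correct one under (C1)--(C3)) is the opposite: condition (C3), namely $\partial^2 B/\partial y_{i_1}^2\to\infty$ as $x\to R^-$, forces the corresponding diagonal derivative $f_{A_1}$ (since $f=\exp(B_0(x,y_0A_1,\dots))$ and $B_0=(1/x)\,\partial B/\partial y_0$) to blow up, so the increasing, continuous left--hand side of the determinant condition $\Delta=0$ must hit the value $1$ \emph{strictly before} the composed arguments reach the singularity of $B$. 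Hence the dominant singularity of the $C_i$ is the system--intrinsic branch point, at which the right--hand sides $f,g,h$ are still analytic, and Theorem 2.33 applies directly. If the singularity really sat at the composed $R(\cdot)$, as you assert, the right--hand sides would themselves be singular there, Theorem 2.33 would not apply, and you would be in a critical--composition regime requiring an entirely different (and unproved) analysis. Your proposed reduction via the auxiliary variable $u=\bigl(1-x/R(y_0C_0,\dots)\bigr)^{1/2}$ is both unnecessary and ill--posed, since the argument of $R$ depends on the unknowns themselves.

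A secondary but related misreading: (C2) is not used to ensure that $H_{i_1}$ is not identically zero. Its role is to guarantee that at $x=0$ the spectral radius of the Jacobian of the system is $<1$ (equivalently, that the left--hand side of the determinant condition starts below $1$), so that the power--series solution exists and the increasing function genuinely crosses the value $1$ at an interior point. Your positivity argument for $h_i>0$ and the use of the implicit function theorem on the characteristic system for the analyticity of $\rho_1(y_0,y_1)$ are fine once the branch point is correctly located.
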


\begin{proof}
	Before using the extended subcriticality conditions, we recall some basic facts on (positive) systems of functional equations that are taken from~\cite[Theorem 2.33]{DrmotaBook}.
	Let $f$, $g$ and $h$ be power series with non--negative coefficients.
    Suppose that we have a system of three equations of the form
	\begin{equation}\label{eq:impl.system}
    	\begin{array}{ll}
        	A_1 &= f(x,A_1,A_2,A_3), \\
        	A_2 &= g(x,A_1,A_2,A_3), \\
        	A_3 &= h(x,A_1,A_2,A_3),
    	\end{array}
	\end{equation}
	in unknown functions $A_1 = A_1(x)$, $A_2 = A_2(x)$, $A_3 = A_3(x)$.
	We also assume that the system is strongly connected (namely, no subsystem can be solved before solving the whole system).
    We set
	\begin{equation*}
	    \Delta = \left|
    	\begin{array}{ccc}
        	1- f_{A_1} & - f_{A_2} & - f_{A_3} \\
            -g_{A_1} & 1- g_{A_2} & - g_{A_3} \\
            -h_{A_1} & - h_{A_2} & 1- h_{A_3}
    	\end{array} \right|
    \end{equation*}
	the functional determinant of the system $\{A_1 - f = 0,\, A_2 - g = 0,\, A_3 - h = 0\}$.
	Let $r$ be the spectral radius of the Jacobian matrix of the right hand--side of System~\eqref{eq:impl.system}.
	Note that $r = 1$ implies that $\Delta = 0$.
	
	We also assume that there is a unique non--negative solution $A_1(0)$, $A_2(0)$ and $A_3(0)$ with the property that $r < 1$, which also shows that $\Delta \ne 0$.
	Furthermore by monotonicity of the spectral radius, it follows that $f_{A_1}(0,A_1(0),A_2(0),A_3(0)) < 1$, $g_{A_2}(0,A_1(0),A_2(0),A_3(0)) < 1$, and $h_{A_3}(0,A_1(0),A_2(0),A_3(0)) < 1$.
	By the property $r<1$ and iteration, the solutions for $x=0$ extend to the three power series solutions $A_1(x)$, $A_2(x)$ and $A_3(x)$ with non--negative coefficients and (due to strong connectedness) a common positive radius of convergence $\bar \rho$.
	Also, by~\cite[Theorem 2.33]{DrmotaBook} this condition is determined by the fact that $r = 1$, provided that we are still working within the region of convergence of $f$, $g$, and $h$.
	This means that the radii of convergence
	$\rho_f(a_1,a_2,a_3)$, $\rho_g(a_1,a_2,a_3)$ and $\rho_h(a_1,a_2,a_3)$
	of the mappings
	$x\to f(x,a_1,a_2,a_3)$, $x\to g(x,a_1,a_2,a_3)$ and  $x\to h(x,a_1,a_2,a_3)$ (where $a_1,a_2,a_3$ are positive)
	are big enough in the sense that
	$\rho_f(A_1(\bar \rho),A_2(\bar\rho), A_3(\bar\rho)) > \bar\rho$, $\rho_g(A_1(\bar \rho),A_2(\bar\rho), A_3(\bar\rho)) > \bar\rho$ and $\rho_h(A_1(\bar \rho),A_2(\bar\rho), A_3(\bar\rho)) > \bar\rho$.
		
	The condition $r=1$ can be witnessed by the condition $\Delta = 0$, or equivalently by the condition
	\begin{equation*}\label{eqD0}
    	\frac{f_{A_2} g_{A_3} h_{A_1} + f_{A_3} g_{A_1} h_{A_1} }{(1-f_{A_1})(1-g_{A_2})(1-h_{A_3})} + \frac{g_{A_3} h_{A_2}}{(1-g_{A_2})(1-h_{A_3})} + \frac{f_{A_3} h_{A_1}}{(1-f_{A_1})(1-h_{A_3})} + \frac{f_{A_2} g_{A_1}}{(1-f_{A_1})(1-g_{A_2})} = 1.
	\end{equation*}
	Note that the left hand--side is an increasing and continuous function in $x$.
	Thus, if we assume that $r<1$ for $x=0$ then the left hand--side is smaller than $1$ for $x=0$.
	Furthermore, if we have either
	$f_{A_1}(x,a_1,a_2,a_3) \to \infty$ if $x\to \rho_f(a_1,a_2,a_3)$, or
	$g_{A_2}(x,a_1,a_2,a_3)\to\infty$ if $x\to \rho_g(a_1,a_2,a_3)$, or
	$h_{A_3}(x,a_1,a_2,a_3) \to \infty$ if  $x\to \rho_h(a_1,a_2,a_3)$
	then the left hand--side will eventually reach the value one (it would be sufficient to replace $\infty$ by any limit $\ge 1$).
	Thus, in order to find $\bar \rho$ we just have to find the $x$ for which the left hand--side hits the value one.
	By assumption we reach the value $1$ inside the region of convergence of $f$, $g$ and $h$.
	Hence, by~\cite[Theorem 2.33]{DrmotaBook} it follows that the solution functions $A_1(x)$, $A_2(x)$ and $A_3(x)$ have a square--root type singularity with an expansion of the claimed form at $x = \bar\rho$.
	Of course everything can be done, too, if the system of equations has some additional parameters, for example $y_0,y_1,y_2$, as in our application cases.

    Let us now move to the special situation of System~\eqref{eqLe4syst}.
    In this situation, all the above assumptions (positivity, strong connectedness, $\dots$) are satisfied.
	Now let us also observe that $\partial^2 B/\partial y_0^2 \to \infty$ (when $x \to R(y_0,y_1,y_2)^-$) implies that $f_{A_1}\to \infty$ as well, since
    \begin{equation}\label{eq:mis-f}
	   f = \exp(B_0(x,y_0 A_1,y_1(A_2+A_3),y_1 A_2))
    \end{equation}
	and $B_0 = (1/x)(\partial B/\partial y_0)$ (note the two different meanings of $y_0$: in this second equation we mean the derivative with respect to the second variable of $B_0(x,y_0,y_1,y_2)$).
	Similar observations hold for $g_{A_2}$ and $h_{A_3}$.
	Thus, it is clear that \eqref{eqD0} is satisfied inside the region of convergence of $f$, $g$ and $h$, and hence we are done.
\end{proof}

We now show that under the hypotheses of Lemma~\ref{lem:Le6}, one can deduce our main results Theorem~\ref{thm:main1} and Theorem~\ref{thm:main2}.

\begin{proof}[Proof of Theorem~\ref{thm:main1} and Theorem~\ref{thm:main2} for MIS]
    From \eqref{eqlem:Le62} and \eqref{eq:con-mis}, it follows that $C(x,y_0,y_1)$ can be represented, for $x\to \rho_1(y_1,y_2)$, as
    \begin{equation*}
        C(x,y_0,y_1) = c_{0}(y_0,y_1) + c_{2}(y_0,y_1)\left(1 - \frac x{\rho_1(y_0,y_1)}\right) + c_{3}(y_0,y_1) \left(1 - \frac x{\rho_1(y_0,y_1)}\right)^{3/2} +\cdots ,
    \end{equation*}
    where $c_3(y_0,y_1)> 0$ for positive real numbers $y_0$ and $y_1$.
    Thus, if we set $y_0 = y_1 = 1$ and $\rho_1(1,1) = \rho_1$, then for $x\to \rho_1$ we have:
    \begin{equation*}
        C(x,1,1) = c_{0}(1,1) + c_{2}(1,1)\left(1 - \frac x{\rho_1}\right) + c_{3}(1,1)\left(1 - \frac x{\rho_1}\right)^{3/2} + \cdots,
    \end{equation*}
    and consequently
    \begin{align*}
        G(x,1,1) = \sum_{n\ge 0} |\mathcal{I}_n| \frac{x^n}{n!}
        & = \exp(C(x,1,1)) \\
        & = g_{0}(1,1) + g_{2}(1,1)\left(1 - \frac x{\rho_1}\right) + g_{3}(1,1)\left(1 - \frac x{\rho_1}\right)^{3/2} + \cdots.
    \end{align*}
    This directly implies Theorem~\ref{thm:main1} for the case of maximal independent sets by standard singularity analysis (see \cite{fs05}).
    We just have to observe that $y_1 = \rho_1 = \rho_1(1,1)$ is the only singularity on the circle of convergence.
    However, this follows from the fact that there exists graphs of all sizes $n\ge 1$.

    Finally, if we set $y_1 = 1$, then we have that $G(x,y_0,1)=\sum_{n\ge 0}  \mathbb{E}[y_0^{SI_n}]\, | \mathcal{I}_n |\, \frac{x^n}{n!}$ is equal to
    \begin{align*}
        \exp(C(x,y_0,1)) =  g_{0}(y_0,1) + g_{2}(y_0,1) \left( 1- \frac x{\rho_1(y_0,1)} \right) + g_{3}(y_0,1) \left( 1- \frac x{\rho_1(y_0,1)} \right)^{3/2}+ \cdots.
    \end{align*}
    So a direct application of \cite[Theorem 2.35]{DrmotaBook} implies a central limit theorem of the proposed form, as well as the asymptotic expansions for the expectation and variance.
    This proves Theorem~\ref{thm:main2} for the case of maximal independent sets.
\end{proof}

\paragraph{Observation.}

The same arguments can be applied to the study of maximal matchings.
Observe that the analogue of Equation~\eqref{eq:mis-f} satisfies the same properties, so again in this setting we can apply Lemma~\ref{lem:Le6}.
Similarly, one can use the very same arguments as in the previous proof to obtain the conclusions in Theorems~\ref{thm:main1} and~\ref{thm:main2} for maximal matchings.

The only difference is that in our setting, matchings are counted in terms of vertices, so the number of matchings will be always an even number.
This fact does not affect the result in Theorem 1. However, in terms of computations, one needs to be careful concerning Theorem 3.
Observe that $\bar{C}(x,z_1)$ is written in terms of $z_1^2$, as matchings are counted in terms of vertices.
Let $\gamma(z_1):=\rho(z_1^2)$ be the corresponding radius of convergence.
Then if we want to study the distribution of the number of edges (instead of vertices), we have to study the associated EGF $\bar{\bar{C}}(x,z_1)=\bar{C}(x,z_1^{1/2})$, which has radius of convergence $\rho(z_1)$.
And we can apply the Quasi--Powers Theorem for $\bar{\bar{C}}(x,z_1)$, so that the corresponding expectation is asympotically linear with multiplicative constant
\begin{equation*}
	- \frac{\rho'(1)}{\rho(1)} = -\frac{1}{2}\frac{\gamma('1)}{\gamma(1)}.
\end{equation*}

%
%

\section{Applications}\label{S:applications}

This section is devoted to the verification of the three conditions (C1) -- (C3) for particular graph families.
Namely, in the cases of Cayley trees, cacti graphs and series--parallel graphs.
Notice that Condition (C2) will be satisfied automatically as none of the above three classes contains a 2--connected graph with either zero or one vertex.
And it only remains for us to check that conditions (C1) and (C3) are satisfied in all three cases.

\subsection{Cayley trees}\label{subsec:forests}

Our first application concerns one of the most basic subcritical graph class: forests of Cayley trees.
We note that the case of maximal independent sets was already discussed in \cite{MeirMoon88}.
In both maximal independent sets and maximal matchings, we proceed following the block-decomposition of trees, and we explicitly give the generating functions $B_0,\,B_1$ and $B_2$.
Notice that in a tree, blocks are reduced to single edges.
The computations of the constants given in Table 1 are obtained by computing the branch points of the corresponding system, using the explicit expressions for $B_0,\,B_1$ and $B_2$.

\subsubsection{Maximal independent sets in trees}

From the possible choices of an independent set in a single edge, namely
\begin{equation*}
	B(x,y_0,y_1,y_2) = \frac{x^2}2\left( 2 y_0y_1 + y_2^2 \right),
\end{equation*}
we obtain that
\begin{equation*}
    B_0 = xy_1,\,\, B_1 = xy_0,\,\, B_2 = xy_2.
\end{equation*}
All these functions are entire functions whose value at $x=0$ is equal to zero.
Also, for each choice of $y_0,y_1$ and $y_2$, the radius of convergence of $B(x,y_0,y_1,y_2)$ is infinite.
Thus, it is obvious that
\begin{equation*}
    \lim_{x\to\infty} \frac{\partial^2 B}{\partial y_2^2} = \lim_{x\to\infty} x = \infty.
\end{equation*}
So the extended subcriticality conditions are satisfied, and Lemma~\ref{lem:Le6} applies.

We finally have to check that the variance constant $\sigma_1^2$ is in fact positive (even if we do not compute it).
For this purpose, we apply the strategy of \cite[Lemma 4]{DrFuKaKrRu11} by doing formally an elimination procedure in order to reduce the system~\eqref{eqLe4syst} of three (positive) equations into one (positive) equation
of the form $C_0 = F(x,C_0,y_0)$ with solution $C_0 = C_0(x,y_0,1)$, where $F$ is a proper generating function in three variables.
It is easy to check that the conditions of~\cite[Lemma 4]{DrFuKaKrRu11} are satisfied.
Hence, $\sigma_1^2 > 0$.

\subsubsection{Maximal matchings in trees}

Observe that in this case $\bar B(x,z_0,z_1,z_2) = \frac{x^2}2\left( 2 z_0z_2 + z_1^2 + z_2^2 \right)$, which gives:
\begin{equation*}
    \bar B_0 = xz_2,\,\,\bar B_1 = xz_1,\,\,\bar B_2 = x(z_0 + z_2).
\end{equation*}
Hence, we are in a similar situation as above and Lemma~\ref{lem:Le6} applies.
This completes the proof for maximal matchings in trees.
Note that~\cite[Lemma 4]{DrFuKaKrRu11} again applies (after a formal substitution procedure), so that $\sigma_2^2 > 0$.

\paragraph{Observation.}

Let $S$ be a subset of $\mathbb{N}$ containing $1$.
By considering the operator $\sum_{s\in S}x^s/s!$ instead of the general set operator $\exp(x)$, one can directly adapt Lemmas~\ref{lem:blocks_mis} and~\ref{lem:blocks_mm} to analyse maximal independent sets and maximal matchings in forests whose vertex degrees are restricted to the set $S$.
One can then extend the proofs of Theorems \ref{thm:main1} and \ref{thm:main2} to this case.

\subsection{Cacti graphs}\label{subsec:cacti}

A graph is said to be a \emph{cactus graph} if it does not contain $K_4^{-}$ as a minor.
Observe that from the minor definition of this class, a graph is a cactus graph if and only if its connected components are cacti graphs as well.
There is a convenient equivalent definition of cacti graphs in terms of their block decomposition: a connected cactus graph is a connected graph in which every block is either an edge or a (simple) cycle.
In fact, in the univariate case the associated EGF is given by:
\begin{equation*}\label{eq:univ_blocks_mis}
    B(x) = \frac{x^2}2 + \frac{1}2 \log\left(\frac{1}{1 - x}\right) - \frac{x}2 - \frac{x^2}4,
\end{equation*}
where we deleted the cycles of size one and two, and we added the single edge.
In Subsections~~\ref{ss.MIS-cacti} and~\ref{ss.matching-cacti}, we use this characterisation to obtain explicit expressions for cycles refined with an independent set and a matching, respectively.

\subsubsection{Maximal independent sets in cacti graphs}\label{ss.MIS-cacti}

In this subsection, we consider the generating function counting cacti graphs carrying a maximal independent set.
According to our decomposition, we will first study blocks then prove Conditions (C1) -- (C3) on their associated generating function.

\paragraph{Unrooted blocks.} A cycle $C$ with an independent set $I$ can be decomposed in the following way: consider the set of vertices which are at distance at least two from $I$.
Two consecutive vertices $v$ and $w$ in this set are either adjacent or there exists a non-empty path alternating between vertices in $I$ and vertices at distance exactly one from $I$.
Conversely, such pair $C$ and $I$ can be obtained by taking a (possibly of size zero, one or two) base--cycle and by eventually replacing each edge by a sequence of paths whose vertices alternate between those in $I$ and those at distance one from $I$.
The vertices of the base--cycle will then become the vertices of $C$ at distance at least two from $I$.
We refer the readers to Figure~\ref{fig:cacti_mis} for an example of this decomposition.
\begin{figure}[H]
    \centering
    \includegraphics[scale=1]{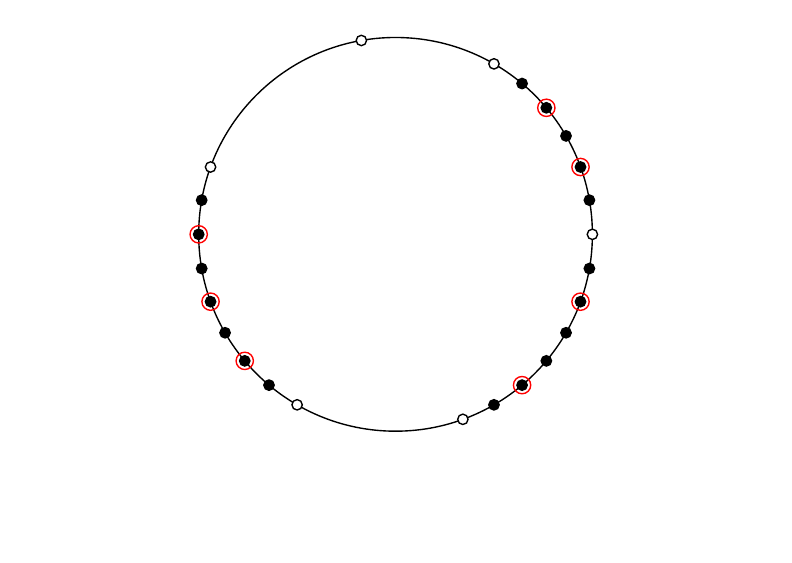}
    \includegraphics[scale=1]{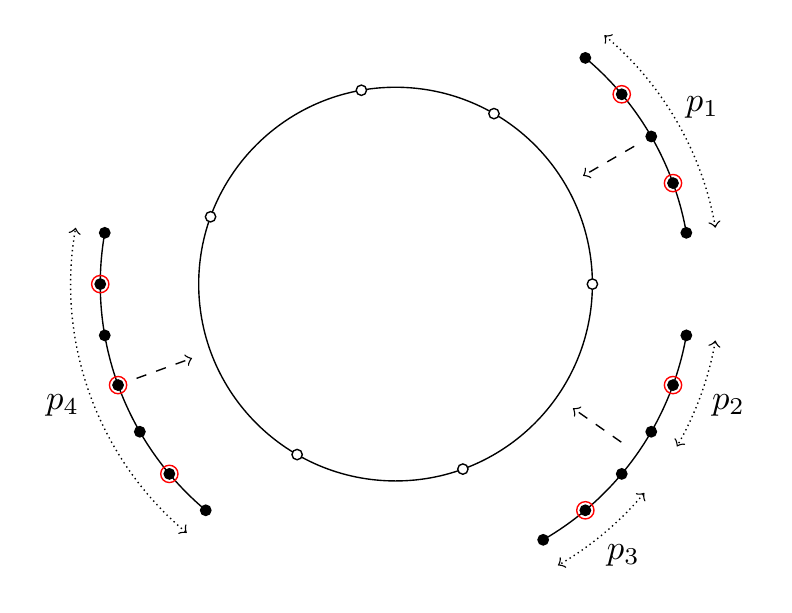}
    \caption{
        An independent set $I$ (vertices circled in red) on a cycle.
        Black vertices are at distance one from $I$.
        White vertices are at distance at least two from $I$.
        The rightmost picture shows the decomposition of a base-cycle carrying an independent set, using alternating paths.
    }
    \label{fig:cacti_mis}
\end{figure}
In this context, we say that a path is {\em alternating} if it is non-empty, if its terminals are vertices at distance one from the independent set, and if its vertices alternate between vertices in the independent set and vertices at distance one from the independent set.
The generating function of alternating paths is given by:
\begin{equation*}
    A(x,y_0,y_1)\equiv A = \frac{x^3y_0y_1^2}{1 - x^2y_0y_1}.
\end{equation*}

We now derive the generating function $U\equiv U(x,y_0,y_1,y_2)$ associated to the family of (unrooted) cycles carrying an independent set.
To that end, it is convenient to partition it into the subfamily of cycles carrying a maximal matching, whose associated GF will be denoted by $U_0\equiv U_0(x,y_0,y_1,y_2)$, and those with at least one vertex at distance two or more from the independent set, with associated GF $U_1\equiv U_1(x,y_0,y_1,y_2)$.
So that $U = U_0 + U_1$.
As discussed above, an equation for $U_1$ can be obtained by considering the base--cycle (whose vertices are encoded by the variable $y_2$) and substituting each edge by a possibly empty sequence of alternating paths:
\begin{equation*}\label{eq:U1_caci_mis}
    U_1 = \frac{1}{2}\log\left(\frac{1}{1 - xy_2(1 - A)^{-1}}\right) - \frac{xy_2}{2} - \frac{x^2y_2^2}{4},
\end{equation*}
where we removed the terms encoding base-cycles with only one or two vertices and empty sequences of alternating paths on the edges.
Similarly for $U_0$, we get:
\begin{equation*}\label{eq:U0_caci_mis}
    U_0 = \frac{1}{2}\log\left(\frac{1}{1 - A}\right) + \frac{1}{2}\log\left(\frac{1}{1 - x^2y_0y_1}\right) - \frac{x^2y_0y_1}{2},
\end{equation*}
where the first (resp. second) logarithm encodes cycles with an odd (resp. even) number of vertices.

The generating function of unrooted blocks is finally obtained from that of unrooted cycles plus the single edge:
\begin{equation*}\label{eq:B_cacti_mis}
    \begin{array}{ll}
        B\equiv B(x,y_0,y_1,y_2)
        & = U(x,y_0,y_1,y_2) + \ds\frac{x^2y_2^2}2 + x^2y_0y_1 \\
        & = \ds\frac{1}{2}\log\left( \frac{1}{1 - y_0y_1(y_1 - y_2)x^3 - x^2y_0y_1 - xy_2} \right) - \ds\frac{xy_2}2 + \ds\frac{x^2y_2^2}4 + \ds\frac{x^2y_0y_1}2.
    \end{array}
\end{equation*}

\paragraph{Singularity analysis.}

We now prove that $B$ verifies Conditions (C1) -- (C3) with $i=0$.
Observe first in Equation \eqref{eq:B_cacti_mis}, that the singular curve of $B$ is given by the roots of
\begin{equation*}
    x^3y_0y_1(y_1 - y_2) + x^2y_0y_1 + xy_2 - 1 = 0,
\end{equation*}
when $y_0$, $y_1$ and $y_2$ are positive real numbers.
We denote this curve by $R(y_0,y_1,y_2)$.
Notice that it satisfies (C1).
If we now consider the second derivative of $B$ with respect to $y_0$, which is a rational function:
\begin{equation*}
    \frac{\partial^2 B}{\partial y_0^2}(x,y_0,y_1,y_2) = \frac{(xy_1 - xy_2 + 1)^2y_1^2x^4}{2(1 - y_0y_1(y_1 - y_2)x^3 - x^2y_0y_1 - xy_2)^2}.
\end{equation*}
Then it is direct to check that $\partial^2 B/\partial y_0^2(0,y_0,y_1,y_2) = 0/2 = 0$, and thus that (C2) is satisfied.
And that (C3) also holds by definition of $R(y_0,y_1,y_2)$, as $\partial^2 B/\partial y_0^2 \to \infty$ when $x\to R(y_0,y_1,y_2)$.

Let us finally mention that $\sigma_1^2 > 0$ holds in this case, and that it can be shown in the same way as for Cayley trees.

\subsubsection{Maximal matchings in cacti graphs}\label{ss.matching-cacti}

In order to study the generating function of cacti graphs with a maximal matching, we first compute the generating function $\bar B\equiv \bar B(x,z_0,z_1,z_2)$ counting unrooted matched blocks.
To that end, we first obtain an equation satisfied by the generating function $V(x,z_0,z_1,z_2)$ counting unrooted cycles carrying a matching.

In what follows, a path carrying a maximal matching will be called a {\em matching path}.
Notice that such a path may contain vertices not incident with an edge of the matching.
Those vertices will later play the role of the vertices in the independent set of the matched blocks.
Observe then that any unrooted cycle carrying a matching can be obtained by eventually replacing each edge of a base--cycle (of size possibly zero, one or two) by a matching path.
The marginal vertices of the resulting cycle will then correspond to the original vertices of the base--cycle.
See Figure \ref{fig:cacti_match} for an illustration of this decomposition.
As they can be described as a sequence of edges, it is rather direct to obtain the generating function counting matching paths:
\begin{equation*}
    P(x,z_0,z_1) = xz_0 + (1 + xz_0)\frac{x^2z_1^2(1 + xz_0)}{1 - x^2z_1^2(1 + xz_0)}.
\end{equation*}
\begin{figure}[H]
    \centering
    \includegraphics[scale=1]{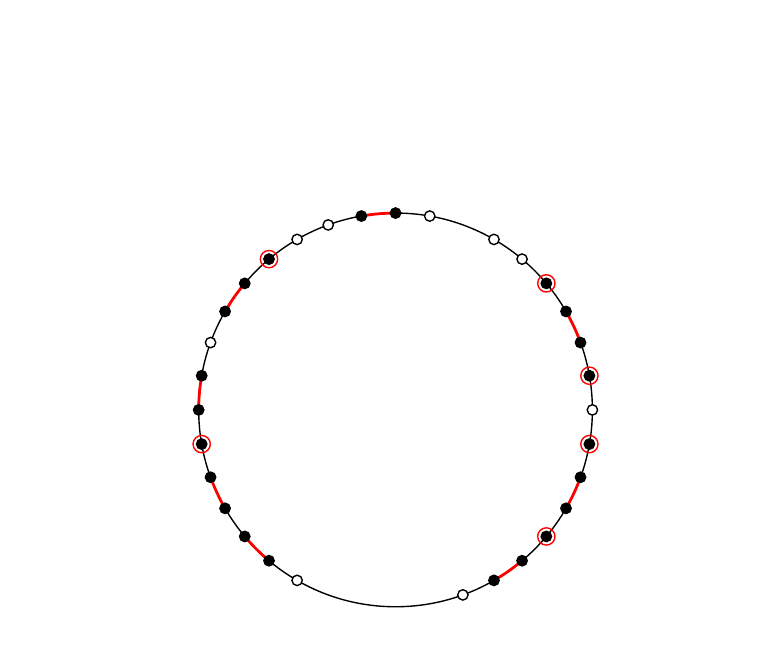}
    \includegraphics[scale=1]{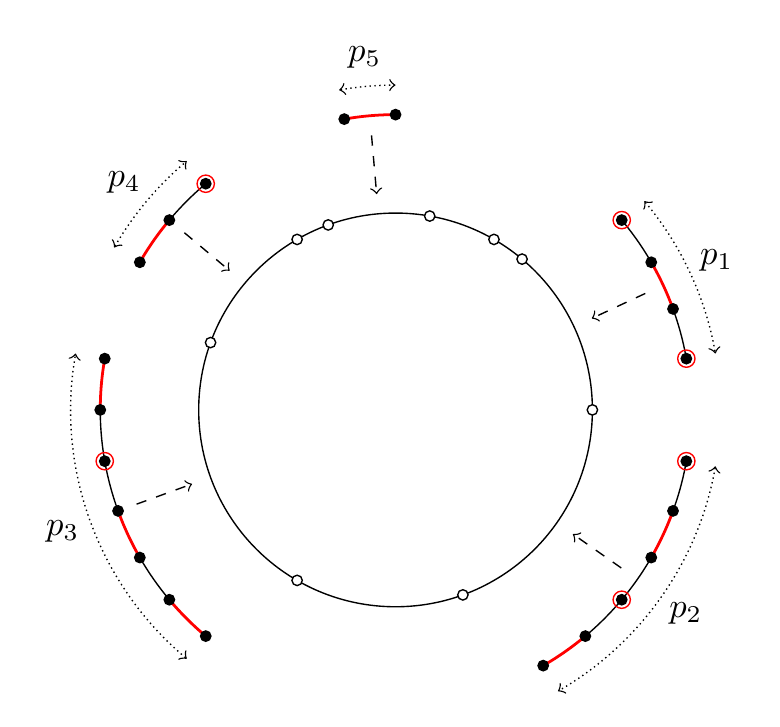}
    \caption{
        On the left, a matched block $(I,M,b)$.
        Vertices in red--black are the vertices in $I$.
        Red edges are edges in $M$.
        On the right, the decomposition of the matched block with matching paths.
    }
    \label{fig:cacti_match}
\end{figure}

From there and similarly to the case of independent sets, we partition the unrooted cycles in two sub-families, whether they have zero or at least one marginal vertex.
The associated generating functions will respectively be denoted by $V_0(x,z_0,z_1)$ and $V_1(x,z_0,z_1,z_2)$.
For instance, $V_0(x,z_0,z_1)$ counts unrooted cycles with a maximal matching.
Those cycles can be directly obtained from a matching path of size at least three, with at least one endpoint incident to the matching, and whose two ends are joined by a non matched edge so as to create a cycle.
Thus we get:
\begin{align*}
	V_0(x,z_0,z_1) & = \frac{1}{2}\log\left( \frac{1}{1 - x^2z_1^2(1 + xz_0)} \right) - \frac{x^2z_1^2}2, \\
    V_1(x,z_0,z_1,z_2) & = \frac{1}{2}\log\left( \frac{1}{1 - xz_2(1 + P(x,z_0,z_1))} \right) - \frac{xz_2(1 + xz_0)}2 - \frac{x^2z_2^2}4.
\end{align*}
Both equations can be understood similarly to their counterparts for independent sets.
The main difference is with the discarded term $x^2z_2z_0/2$ appearing in the second equation.
It encodes the case where the base--cycle is a loop with a single marginal vertex, and a matching path reduced to a single vertex (encoded by the first term of $P(x,z_0,z_1)$, that is $xz_0$) pasted on its edge.

The generating function of unrooted blocks is then obtained from that of unrooted cycles plus the single edge:
\begin{equation}\label{eq:B_cacti_match}
    \begin{array}{ll}
        \bar B
        &= V(x,z_0,z_1,z_2) + x^2z_0z_1 + \ds\frac{x^2z_2^2}2 \\
        & = \frac{1}{2}\log\left( \ds\frac{1}{1 - x^3z_0z_1^2 - (z_0z_2 + z_1^2)x^2 - xz_2} \right) + x^2z_0z_1 + \ds\frac{x^2z_2^2}2 - x^2z_1^2 - \ds\frac{xz_2(1 + xz_0)}2.
    \end{array}
\end{equation}

\paragraph{Singularity analysis.}

Notice in the equation above, that the singular curve of $\bar B$ is given by the roots $1 - x^3z_0z_1^2 - (z_0z_2 + z_1^2)x^2 - xz_2 = 0$, when $z_0$, $z_1$ and $z_2$ are positive real numbers.
As we did in the case of maximal independent sets, we can analogously show that Conditions (C1) -- (C3) are satisfied.
And again, it follows that $\sigma_2^2 > 0$.

\subsection{Series--parallel graphs}\label{subsec:sp}

Recall that a graph is said to be \emph{series--parallel} (SP for short) if it does not contain the graph $K_4$ as a minor.
As $K_4$ is a 3--connected graph, it follows that a graph is SP if and only if its connected and 2--connected components are SP as well.
So, similarly to the study of cacti graphs, our objective is to obtain expressions for 2--connected SP graphs with either an independent set or a matching.

This is a more complex family than cacti graphs.
We briefly recall the strategy to get enumerative formulas for the EGF of 2--connected SP graphs.
All the details can be found in \cite{BGKN07}.

We first study the EGF $B(x,y)$, where $y$ marks the number of edges.
To get an expression for $B(x,y)$, one needs to introduce an auxiliary graph class: a {\em series--parallel network} is defined as a simple graph with two distinguished vertices, called the \emph{poles} of the network (which are not labelled), and such that adding an edge between the two poles creates a 2--connected multigraph.
If there is an edge joining the two poles, it is called the \emph{root edge} of the network.
We denote by $D(x,y)$ the EGF of SP--networks.
In this situation $D(x,y)$ uniquely determines $B(x,y)$, as shown by the next equation:
\begin{equation*}
    2(1 + y)\frac{\partial B(x,y)}{\partial y} = x^2 + x^2D(x,y),
\end{equation*}
which can be understood as follows.
Any network can be obtained by marking and orienting an edge of a $2$--connected graph $G$, and adding back the labels of its two endpoints: the poles.
The summand $x^2$ encodes the single rooted edge that cannot be obtained that way.

We now decompose networks.
A network is said to be \emph{series} when it is obtained from a cycle with a distinguished edge, whose endpoints become the poles, and such that every other edge is replaced by a non--series network.
A network is said to be \emph{parallel} when it is obtained by gluing two or more non-parallel networks, none of them containing the root edge, along their poles.
In the situation of SP--graphs, those are all the possible types of networks, at the exception of the trivial network consisting of a single rooted edge.
We can then translate this decomposition into generating functions.
Denoting by $S(x,y)$ and $P(x,y)$ the EGF for series and parallel networks, we have the following relations (see \cite{WALSH198212} and also \cite{tutte1966}):
\begin{eqnarray*}
    D(x,y) & = & y + S(x,y) + P(x,y), \\
    S(x,y) & = & xD(x,y)(D(x,y) - S(x,y)), \\
    P(x,y) & = & y\exp_{\geq 1}(S(x,y)) + \exp_{\geq 2}(S(x,y)).
\end{eqnarray*}

It will be useful to use the following relation that can be obtained by combining the previous equations:
\begin{equation}\label{eq:modification}
    \frac{\partial B}{\partial y} = \frac{x^2}2 \exp(S(x,y)).
\end{equation}
The combinatorial property behind this relation is that an edge--rooted series--parallel graph (that corresponds to the generating function $\partial B/\partial y$) can be seen as a series--parallel network between the two vertices of the root--edge, consisting of this edge and a collection of series-networks between the two vertices.

In the next two subsections, we will refine the above decomposition in order to encode maximal independent sets and maximal matchings, respectively.

\subsubsection{Maximal independent sets in series--parallel graphs.}
We are now concerned with the generating functions of SP graphs carrying a maximal independent set.
As above, the vertices of the graphs carrying an independent set $I$ are said to be of \emph{type $i$} ($i\in \{0,1\}$), when they are at distance $i$ from $I$, and of \emph{type 2} otherwise.

\paragraph{Series--parallel networks.}

We denote by $D_{ij}$ the EGF of SP networks whose poles are of type $i$ and $j$, respectively.
Observe that by symmetry $D_{ij} = D_{ji}$, so we can restrict the range of the pairs of indexes $ij$ to the set $\{00, 01, 02, 11, 12, 22\}$.
The network $D_{ij}$ is either the single rooted edge $e_{ij}$, where $e_{01} = e_{22} = y$ and $e_{ij} = 0$ otherwise, a series network counted by the generating function $S_{ij}$, or a parallel network counted by the generating function $P_{ij}$.
The decomposition for a particular series network is illustrated in Figure \ref{fig:series_decomp_mis}, and one for a parallel network in Figure \ref{fig:parallel_decomp_mis}.
\begin{figure}
    \centering
    \includegraphics[scale=1]{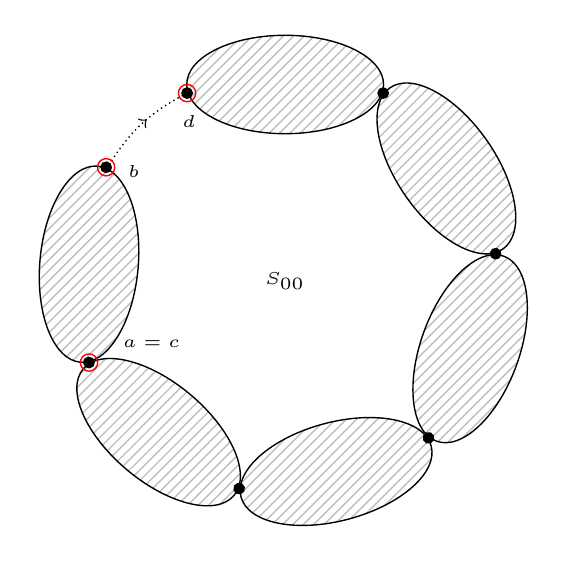}
    \qquad \qquad \qquad
    \includegraphics[scale=1]{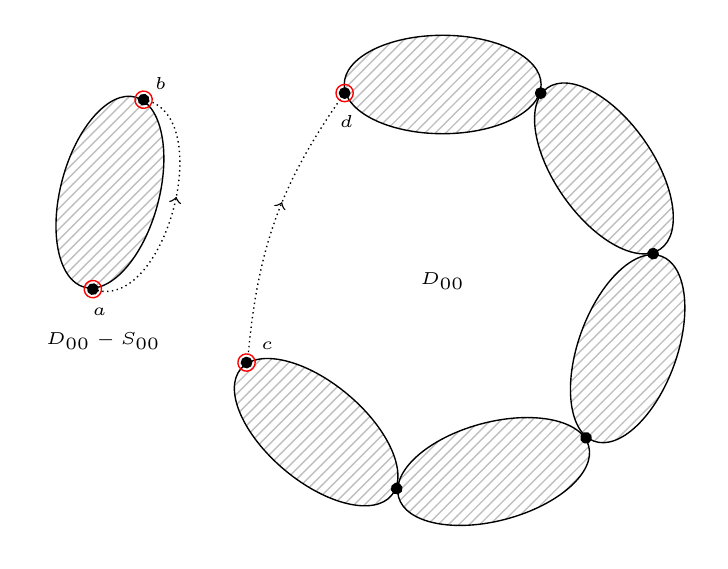}
    \caption{
        Decomposition of a series network carrying an independent set $I$ (in red) and counted by the generating function $S_{00}$.
        It is obtained by identifying poles $a$ and $c$ (that are both in $I$) of the two networks on the right.
        In the general case, $a$ (resp. $c$) could also be of type 1 and then $c$ (resp. $a$) of type 1 or 2, or they could both be of type 2.
    }
    \label{fig:series_decomp_mis}
\end{figure}
\begin{figure}
    \centering
    \includegraphics[scale=1]{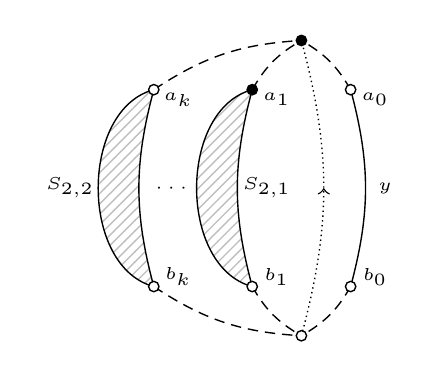}
    \caption{
        A typical parallel network, carrying an independent set, and counted by the generating function $P_{2,1}$.
        It is obtained by identifying together the upper poles of each non--parallel (including the single edge) networks $a_0, a_1, \ldots, a_k$ then the lower poles $b_0, b_1, \ldots, b_k$.
        As the resulting upper pole is of type 1, exactly one of the $a_i's$ ($i\in\{1,\ldots,k\}$) must be of type 1 (in black) while all the others must be of type 2 (in white).
        Similarly, all the $b_i's$ ($i\in\{0,\ldots,k\}$) must be of type 2.
    }
    \label{fig:parallel_decomp_mis}
\end{figure}
We can specify those generating functions via the following positive system of 18 equations and 18 unknowns:
\begin{align*}
    & D_{ij} = e_{ij} + S_{ij} + P_{ij}, \\
    & S_{ij} = D_{i0}xy_0(D_{0j}-S_{0j}) + (D_{i1} + D_{i2})xy_1(D_{1j}-S_{1j}) + (D_{i1}y_1 + D_{i2}y_2)x(D_{2j}-S_{2j}), \\
    & P_{00} = \exp_{\geq 2}(S_{00}), \\
    & P_{01} = y \exp_{\geq 1}(S_{01} + S_{02}) + \exp_{\geq 2}(S_{01}) + \exp_{\geq 1}(S_{01}) \exp_{\geq 1}(S_{02}), \\
    & P_{02} = \exp_{\geq 2} (S_{02}), \\
    & P_{11} = \exp_{\geq 2}(S_{11}) + \exp_{\geq 1}(S_{11})(y\exp(2S_{12}+S_{22}) + \exp_{\geq 1}(2S_{12}+S_{22})) \\
        & \qquad + (1+y)\exp_{\geq 1}(S_{12})^2\exp(S_{22}), \\
    & P_{12} = y \exp_{\geq 1}(S_{12}) \exp(S_{22}) + \exp_{\geq 2}(S_{12}) + \exp_{\geq 1}(S_{12}) \exp_{\geq 1}(S_{22}), \\
    & P_{22} = y \exp_{\geq 1}(S_{22}) + \exp_{\geq 2}(S_{22}).
\end{align*}

In order to proceed further, we eliminate $D_{ij}$ from this system to obtain a positive and strongly connected system of equations for $S_{ij} = S_{ij}(x,y,y_0,y_1,y_2)$ and $P_{ij}= P_{ij}(x,y,y_0,y_1,y_2)$,
where the right hand--side consists of entire functions.
Note that for the equations defining $S_{ij}$, the term $D_{ij}-S_{ij}=e_{ij}+P_{ij}$ makes the whole system positive.
Thus, all functions have a common singular behaviour that is again of square--root type:
\begin{equation*}
    S_{ij}(x,y,y_0,y_1,y_2) = s_{0;ij}(y,y_0,y_1,y_2) +s_{1;ij}(y,y_0,y_1,y_2) \left({1- \frac x{\rho(y,y_0,y_1,y_2)}}\right)^{1/2} + \cdots,
\end{equation*}
and
\begin{equation*}
    P_{ij}(x,y,y_0,y_1,y_2) = p_{0;ij}(y,y_0,y_1,y_2) +p_{1;ij}(y,y_0,y_1,y_2) \left({1- \frac x{\rho(y,y_0,y_1,y_2)}}\right)^{1/2} + \cdots,
\end{equation*}
where $s_{1;ij}(y,y_0,y_1,y_2)<0$ and $p_{1;ij}(y,y_0,y_1,y_2)<0$ for positive $y,y_0,y_1,y_2$.

\paragraph{2--connected series--parallel graphs.}

The next step is to relate these network generating functions with the generating function $B\equiv B(x,y,y_0,y_1,y_2)$ of independent sets in 2--connected series parallel graphs.
We adapt now Equation \ref{eq:modification}, which in our present situation has the following expression:
\begin{align*}
    \frac{\partial B}{\partial y}
        & = x^2y_0y_1\exp(S_{01} + S_{02}) + \frac{x^2}2 y_2^2\exp(S_{22}) + x^2 y_1y_2\exp _{\geq 1}(S_{12})\exp (S_{22}) \\
        & + \frac{x^2}2 y_1^2\left(\exp(S_{11} + 2S_{12} + S_{22}) - 2\exp (S_{12} + S_{22}) + \exp(S_{22})\right).
\end{align*}
This is immediate by considering all possible situations for the rooted edge.
Observe then that the radius of convergence of $\partial B/\partial y$ coincides with the one of networks, so that (C1) is satisfied.
Notice also that Condition (C2) is trivially satisfied.

Let us finally argue on (C3).
Observe that despite the negative terms, $\partial B/\partial y$ is in fact a positive function of the generating functions $\{S_{ij}\}_{i,j=0,\dots,2}$.
Hence, $\partial B/\partial y$ also admits a square--root singularity:
\begin{equation*}
    \frac{\partial B}{\partial y} = b_{0}(y,y_0,y_1,y_2) + b_{1}(y,y_0,y_1,y_2)\left({1 - \frac x{R(y,y_0,y_1,y_2)}}\right)^{1/2} + \cdots,
\end{equation*}
where $b_{1}(y,y_0,y_1,y_2)<0$ for positive $y,y_0,y_1,y_2$.
Next, by applying the proof method of \cite[Lemma 2.28]{DrmotaBook},
we can integrate $\partial B/\partial y$ with respect to $y$, and then take the derivative with respect to $y_0$, to obtain the same kind of square--root singularity for $\partial B/\partial y_0$:
\begin{equation*}
    \frac{\partial B}{\partial y_0} = b_{1,0}(y,y_0,y_1,y_2) + b_{1,1}(y,y_0,y_1,y_2)\left({1 - \frac x{R(y,y_0,y_1,y_2)}}\right)^{1/2} + \cdots,
\end{equation*}
and consequently the following representation of $\partial^2 B/\partial y_0^2$:
\begin{equation*}
    \frac{\partial^2 B}{\partial y_0^2} = b_{2,-1}(y,y_0,y_1,y_2)\left({1 - \frac x{R(y,y_0,y_1,y_2)}}\right)^{-1/2} + b_{2,1}(y,y_0,y_1,y_2) + \cdots,
\end{equation*}
which implies that Condtion (C3) holds for $i = 0$.
This completes the proof for maximal independent sets in series--parallel graphs, given that $\sigma_1^2 > 0$, which is proven as in the previous cases.

\subsubsection{Maximal matchings in series--parallel graphs}

We proceed similarly to the case of independent sets.
Let $G$ be a series--parallel graph with a matching $M$ and an independent set $I$ such that $I\cap V(M)=\emptyset$.
A vertex $v$ of $G$ is said to be of \emph{type 0} when $v\in I$, of \emph{type 1} when $v\in V(M)$ and of \emph{type 2} otherwise.

\paragraph{Series--parallel networks.}

Let $\bar D_{ij}(x,y,z_0,z_1,z_2)$ be the exponential generating function counting matchings in series--parallel networks whose poles are of type $i$ and $j$.
As before, observe that $\bar D_{ij} = \bar D_{ji}$ and for $ij\in \{00,01,02,11,12,22\}$, define $\bar S_{ij}$ and $\bar P_{ij}$ to be the generating functions counting matchings in networks that are respectively series and parallel.

The following system of 18 equations and 18 unknowns holds:
\begin{align*}
    & \bar D_{ij} = e_{ij} + \bar S_{ij} + \bar P{ij}, \\
    & \bar S_{ij} = (\bar D_{i0} - \bar S_{i0})xz_0\bar D_{0j} + (\bar D_{i1} - \bar S_{i1})xz_1\bar D_{2j} + (\bar D_{i2} - \bar S_{i2})x(z_1\bar D_{1j} + z_2\bar D_{2j}), \\
    & \bar P_{00} = \exp_{\geq 2} (\bar S_{00}), \\
    & \bar P_{01} = \bar S_{01}(y\exp (\bar S_{02}) + \exp_{\geq 1}(\bar S_{02})), \\
    & \bar P_{02} = y\exp_{\geq 1}(\bar S_{02}) + \exp_{\geq 2}(\bar S_{02}), \\
    & \bar P_{11} = (y\bar S_{11} + (1+y)\bar S_{12}^2)\exp(\bar S_{22}) + (y + \bar S_{11})\exp_{\geq 1}(\bar S_{22}), \\
    & \bar P_{12} = \bar S_{12}(y\exp(\bar S_{22}) + \exp_{\geq 1}(\bar S_{22})), \\
    & \bar P_{22} = y\exp_{\geq 1}(\bar S_{22}) + \exp_{\geq 2}(\bar S_{22}),
\end{align*}
where this time $e_{02} = e_{11} = e_{22} = y$ and $e_{ij} = 0$.

\paragraph{2--connected series--parallel graphs.}

It remains to check the relevent analytic properties of $\bar B\equiv \bar B(x,y,z_0,z_1,z_2)$ in order to ensure that Lemma \ref{lem:Le6} can be applied.
By eliminating $\bar D_{ij}$ from the above system, we again get a positive and strongly connected system of equations for the set of generating functions $\{\bar S_{ij},\bar P_{ij}\}_{i,j=0,\dots,2}$, where the right hand--side consists of entire functions.
In particular, the functions $\bar S_{ij}$ and $\bar P_{ij}$ all have a common square--root singular behaviour.

And we have that:
\begin{align*}
    \frac{\partial \bar B}{\partial y}
        & = x^2z_0z_1\bar S_{01}\exp(\bar S_{02}) + x^2z_0z_2\exp(\bar S_{02}) + x^2z_1z_2\bar S_{12}\exp(\bar S_{22}) \\
        & + \frac{x^2}{2}z_2^2\exp(\bar S_{22}) + \frac{x^2}{2}z_1^2\left(\bar S_{11} + \bar S_{12}^2 + 1 \right)\exp(\bar S_{22}).
\end{align*}

Finally using the very same arguments as in the case of maximal independent sets, we show that (C1) -- (C3) are satisfied in the context of maximal matchings in series--parallel graphs.
This completes the proof.
As above, let us add that we can similarly show $\sigma_2^2 > 0$.

%
%

\section{Remarks and further research}\label{sec:remarks and further research}

In this paper, we have studied maximal independent sets and maximal matchings in subcritical graph classes.
In particular, we have provided a detailed analytic study for trees, cacti graphs and series--parallel graphs.
We would like to point that our techniques can be extended in several directions.
First, by a slight modification of our encoding one would be able to study independent sets and matchings that are not necessarily maximal.
Secondly, we could in principle analyse other graph classes in the context of subcritical families: this would include outerplanar graphs and graphs defined by a finite set of $3$--connected components.
In the latter, one would need to extend the decomposition grammar used to encode SP--networks to networks arising from $3$--connected components (see for instance \cite{GiNoRu13}).
Finally, a more refined (and longer) study would provide explicit constants for the asymptotic estimate of the variance.
Notably, this could be arranged with some work in the case of SP--graphs (the analysis and computation for the expectation is detailed in Appendix \ref{app:SP}).

Some problems that we have not been able to study are the following: in statistical physics language, our model can be understood as an \emph{annealed model}, where a combinatorial parameter $v$ on a class $\mathcal{C}$ is studied by averaging over all pairs ($g\in\mathcal{C}$, $v(g)$).
It would be very interesting to study both maximal independent sets and maximal matchings from the point of view of the \emph{quenched model}, in which one first averages on $\mathcal{C}$, for example by picking a graph $\gamma$ uniformly in $\mathcal{C}$, then studies the random variable $v(\gamma)$.
In this situation however, new ideas are needed as our encoding is not sufficient to encapsulate this information.

Finally, a challenging problem is to study both annealed and quenched models for planar graphs.
The first step towards the study of the annealed model would be to have access to 3--connected planar graphs carrying either an independent set or a matching, which already seems a testing problem in map enumeration.

\paragraph{Acknowledgements}

This work was started during a visit of the first author at Freie Uniersit\"at Berlin.
The authors would like to thank Mireille Bousquet--Mélou for explaining the connections of our models with the ones that arise in statistical physics, and the anonymous referees of both the final version of this paper and the extended abstract~\cite{DRRR18} of this work, which was presented at the conference \emph{Analysis of Algorithms'18}, for suggesting several improvements.

%
%

\bibliographystyle{abbrv} 	
\bibliography{mis-biblio}

%
%
\newpage
\appendix
\section{Appendix: Computations of the constants}\label{app:appendix}

\subsection{Computations for cacti graphs}\label{app:cacti}

\paragraph{Number of maximal independent sets.}

From Equation \eqref{eq:B_cacti_mis}, one can compute the GFs $B_i\equiv B_i(x,y_0,y_1,y_2)$:
\begin{align*}\label{eq:Bi_cacti_mis}
    & B_i = \frac{1}{x}\frac{\partial B}{\partial y_i}(x,y_0,y_1,y_2)
    & (i\in\{0,1,2\}).
\end{align*}
Notice that $B_0$, $B_1$ and $B_2$ are all in fact rational functions of $x$, $y_0$, $y_1$ and $y_2$.
Substituting now each $B_i$ ($i\in\{0,1,2\}$) by its rational form in the system of equations of Lemma \ref{lem:blocks_mis}, we obtain a system of three implicit equations in variables $C_0(x,y_0,y_1)$, $C_1(x,y_0,y_1)$, $C_2(x,y_0,y_2)$, $x$, $y_0$ and $y_1$ only.

Let us now denote the above system by ${\cal S}$ and the determinant of its Jacobian by ${\cal DJ(S)}$.
After setting $y_0=y_1=1$ and $x=\rho_1$, one can obtain, using for example {\tt Maple/fsolve}, numerical values up to any degree of precision of the four solutions of the system ${\cal S \cup DJ(S)}$:
\begin{equation}\label{eq:values_cacti_mis}
    \begin{array}{lllll}
        & \rho_1 \approx 0.1867604863, &&
        & C_0(\rho_1) \approx 1.5865482480, \\
        & C_1(\rho_1) \approx 0.6912173223, &&
        & C_2(\rho_1) \approx 1.1812890598.
    \end{array}
\end{equation}

Finally, recall that in Section \ref{subsec:cacti} we already proved that
\begin{equation*}
    |{\cal I}_n| \sim A_1\cdot n^{-5/2}\cdot \rho_1^{-n}\cdot n!,
\end{equation*}
where $A_1>0$, and $\rho_1$ is as above.
So that together with the following estimate (see \cite[Theorem 15]{DrFuKaKrRu11}) of the number of cacti graphs on $n$ vertices:
\begin{align*}
    & g_n \sim g\cdot n^{-5/2}\cdot \rho^{-n}\cdot n!,
    & \text{where } \rho \approx 0.2387401437 \text{ and } g>0,
\end{align*}
one can obtain an estimate of the number of maximal independent sets in a uniformly at random cactus graph on $n$ vertices:
\begin{align*}
    & AI_n = \frac{|\mathcal{I}_n|}{g_n} \sim C \cdot \alpha^{n},
    & \text{where } C = A_1/g > 0  \text{ and } \alpha = \rho/\rho_1 \approx 1.2783225640.
\end{align*}

\paragraph{Expected size of a maximal independent set.}

In the same setting, we can now estimate the expected size of a typical maximal independent set.
It works as follows.
Observe that the variable $y_0$ in any of the generating functions $C_i$ ($i=0,1,2$) encodes the size of a maximal independent set.
So that after setting $x = \rho_1(y_0)$ in the system ${\cal S \cup DJ(S)}$, we can differentiate it to obtain a system characterising $\rho_1'(y_0)$ (see the proof of \cite[Theorem 2.35]{DrmotaBook}).
From there, we can again set $y_0=y_1=1$ to substitute $\rho_1(1) = \rho_1$ and $C_i(\rho_1)$ ($i=0,1,2$) with their approximated values in \eqref{eq:values_cacti_mis}.
This gives us a system characterising uniquely $\rho_1'(1)$ that we can again solve numerically.
In particular, we get that:
\begin{equation*}
    \rho_1' = \rho_1'(1) \approx -0.0805687207.
\end{equation*}

And we conclude by \cite[Theorem 2.35]{DrmotaBook}:
\begin{align*}
    & \mathbb{E}[SI_n] = \mu n + O(1),
    & \text{where } \mu = -\frac{\rho_1}{\rho_1'} \approx 0.4314013220.
\end{align*}

\paragraph{Number of maximal matchings.}

From Equation \eqref{eq:B_cacti_match}, one can now compute the rational GFs $\bar B_i\equiv \bar B_i(x,z_0,z_1,z_2)$:
\begin{align}\label{eq:Bi_cacti_match}
    & \bar B_i = \frac{1}{x}\frac{\partial \bar B}{\partial z_i}(x,z_0,z_1,z_2)
    & (i\in\{0,1,2\}).
\end{align}
As above, we substitute each $\bar B_i$ in the system given in Lemma \ref{lem:blocks_mm} by their rational forms in \eqref{eq:Bi_cacti_match}.
Now after computing the Jacobian of the resulting system, one can obtain the approximated values of each variable when setting $z_0=z_1=1$, as in the case of independent sets:
\begin{equation*}
    \begin{array}{lllll}
        & \rho_2 \approx 0.2016232044, &&
        & \bar C_0(\rho_2) \approx 1.4072706847, \\
        & \bar C_1(\rho_2) \approx 0.6446283865, &&
        & \bar C_2(\rho_2) \approx 1.3374106486.
    \end{array}
\end{equation*}
This implies that the number of maximal matchings in a uniformly at random cactus graph on $n$ vertices is asymptotically:
\begin{align*}
    & AM_n \sim D \cdot \beta^{n},
    & \text{where } D > 0 \text{ and } \beta \approx 1.1840906130.
\end{align*}

\paragraph{Expected size of a maximal matching.}

We proceed in a similar manner as for the expected size of a maximal independent set.
Notice that this time however, it is the variable $z_1^2$ which encodes the size of the maximal matchings.
And we get:
\begin{equation*}
    \rho_2' = \rho_2'(1) \approx -0.6934685099.
\end{equation*}
So that:
\begin{align*}
    & \mathbb{E}[SM_n] = \lambda n + O(1),
    & \text{where } \lambda = -\frac{1}{2}\frac{\rho_2}{\rho_2'} \approx 0.3467342549.
\end{align*}

\subsection{Computations for series--parallel graphs}\label{app:sp-graphs}

In this subsection, we obtain some of the constants for the case of series--parallel graphs.
This situation is slightly different compared to that of cacti graphs: the system of equations that we obtain is too complex, so that we cannot obtain direct asymptotic estimates.
In particular, we did not find a way to directly compute the constant growth for connected series--parallel graphs.
Our strategy will then consist in adapting the grammar introduced in~\cite{CFKS08} in order to obtain equations directly from networks.
As shown in the following, this methodology provides an even longer system of equations but that can be treated by symbolic programs efficiently.

In the rest of this Appendix and for each graph classes, we will use $\rho_1$, $\rho_2$ for the radius of convergence in the study of maximal independent sets and maximal matchings, respectively.

\subsubsection{Rooted dissymmetry: from edge--rooted to vertex-rooted graphs}\label{app:SP}

The generating function counting vertex--rooted 2--connected SP graphs from edge--rooted ones can be obtained via a rooted analogue of the so-called {\sl dissymmetry theorem for trees}, as designed for 2--connected graphs in~\cite[Section 5.3.3]{CFKS08}.

\paragraph{\sl Tree decomposition of a 2--connected series--parallel graph.}

The recursive decomposition of a given 2--connected series--parallel graph $\gamma$ in terms of networks induces a unique tree $\tau(\gamma)$ (see \cite[Section 4.3]{CFKS08}), which is called the {\sl RM--tree} associated to $\gamma$.
The letters R and M represent the types of nodes of $\tau(\gamma)$: R--nodes or {\sl ring nodes} when the underlying network--decomposition is series, and M--nodes or {\sl multiple edge nodes} when the underlaying network--decomposition is parallel.

\paragraph{\sl Restricted $RM$--trees.}

If our 2--connected series--parallel graph $\gamma$ is rooted at a vertex $v$, it is useful to consider a {\sl restriction} of its associated RM--tree $\tau(\gamma)$: that is the subtree of $\tau(\gamma)$ induced by the nodes containing $v$ (see~\cite[Section 4.4]{CFKS08}).
Let then $\mathcal{T}$ be the family of all such restricted RM--trees.
The decomposition directly implies that there is a bijection between $\mathcal{T}$ and the family of 2--connected series--parallel graphs rooted at a vertex (enriched by either an independent set or a matching).
So that to study the latter family by means of generating functions, one can instead enumerate the family of restricted RM--trees.

\paragraph{\sl The dissymmetry theorem for trees.}

Now from $\mathcal{T}$, it is then comparatively easy to enumerate the different related families $\mathcal{T}^{\bullet}$, $\mathcal{T}^{\bullet-\bullet}$ and $\mathcal{T}^{\bullet\to\bullet}$ of the RM--trees rooted respectively at a node, at an edge or at a directed edge.
The main interest of this method is that one can exploit the following relation between those combinatorial classes, called the {\sl dissymmetry theorem for trees} (see~\cite[Section 4.1]{BerLaLe98} for a proof):
    \begin{equation*}
        \mathcal{T} \cup \mathcal{T}^{\bullet\to\bullet} \simeq \mathcal{T}^{\bullet} \cup \mathcal{T}^{\bullet-\bullet},
    \end{equation*}
which translates into a functional equation between the associated generating functions.
This ultimately allows us to access, via the bijection, the generating function counting 2--connected SP graphs rooted at a vertex.

Similarly to what was done for networks, one can extend such a decomposition to 2--connected SP graphs carrying an independent set or a matching.
This will be done in the next two subsections.

\subsubsection{Independent sets in 2--connected vertex-rooted series--parallel graphs}

As discussed above, one can associate a 2--connected SP graph (not reduced to a single edge), carrying an independent set $I$, and rooted a vertex $v$ at distance $i$ ($i=0,1,2$) from $I$, to a unique restricted RM--tree whose every node contains $v$.
For each $i\in\{0,1,2\}$, let then $T_i\equiv T_i(x,y_0,y_1,y_2)$ be the generating function counting those restricted RM--trees and observe that the bijection implies the equality $B_i - xy_i = T_i$.

Now in order to use the dissymmetry theorem on the $T_i$'s, we study next for each $i\in\{0,1,2\}$ the generating functions $T_i^M\equiv T_i^M(x,y_0,y_1,y_2)$, $T_i^R\equiv T_i^R(x,y_0,y_1,y_2)$, $T_i^{M\to R}\equiv T_i^{M\to R}(x,y_0,y_1,y_2)$ and $T_i^{M-R}\equiv T_i^{M-R}(x,y_0,y_1,y_2)$ respectively counting the restricted RM--trees rooted at an M-node, at an R--node, and at an edge (both directed and not) between an M--node and an R--node.
Notice first that due to the maximality of both the series and the parallel decompositions, there cannot be an edge between two R--nodes or two M-nodes in a restricted RM--tree.
And observe that by symmetry $T_i^{M\to R} = 2T_i^{M-R}$, for all $i\in\{0,1,2\}$.

\paragraph{\sl Restricted RM--tree rooted at an M--node.}

In this case, the underlaying graph carrying an independent set is decomposed from a parallel edge (with at least three edges), rooted at one of its endpoints, and in which every edge but at most one is substituted by a series network.
As before, we need to specify each generating function with respect to the type of the endpoints.
Let $M_{ij}\equiv M_{ij}(x,y_0,y_1,y_2,y)$ be such a generating function, counting the case where the two endpoints are of respective type $i$ and $j$.
The variable $y$ encodes the number of edges.
Again by symmetry it holds that $M_{ij} = M_{ji}$.
The {\sl textbook case} is the one where the two endpoints are of type 2:
\begin{equation*}
    M_{22} = y\exp_{\geq 2}(S_{22}) + \exp_{\geq 3}(S_{22}).
\end{equation*}
Remark now that in the cases where one of the endpoints is of type 0 and the other is of type 0 or 2, then all the edges of the parallel edge must be substituted by a series network:
\begin{equation*}
    M_{00} = \exp_{\geq 3}(S_{00}) \text{ and } M_{02} = \exp_{\geq 3}(S_{02}).
\end{equation*}
The decompositions of the remaining cases are slightly more involved, but can be proved following an argumentation in the same lines:
\begin{align*}
    & M_{01} = y\exp_{\geq 2}(S_{01} + S_{02}) + \exp_{\geq 2}(S_{02})S_{01} + \exp_{\geq 1}(S_{02})S_{01}^2/2 + \exp_{\geq 3}(S_{01})\exp(S_{02}) \\
    & M_{11} = \exp(2S_{12} + S_{22})(y\exp_{\geq 2}(S_{11}) + \exp_{\geq 3}(S_{11})) + \exp_{\geq 1}(2S_{12} + S_{22})(yS_{11} + S_{11}^2/2) \\
    & + \exp_{\geq 2}(2S_{12} + S_{22})S_{11} + (y\exp(S_{22}) + \exp_{\geq 1}(S_{22}))S_{12}^2/2 + (1 + y)\exp_{\geq 3}(S_{12}) \exp(S_{22}) \\
    & M_{12} = \exp_{\geq 3}(S_{12})\exp(S_{22}) + S_{12} \exp_{\geq 2}(S_{22}) + \exp_{\geq 1}(S_{22})S_{12}^2/2 \\
    & + y\left(\exp_{\geq 1}(S_{12})\exp_{\geq 1}(S_{22}) + \exp_{\geq 2}(S_{12})\right).
\end{align*}
And we can finally write the different generating functions counting the restricted RM--trees rooted at an M--node:
\begin{align*}
    & T_i^M = x\left(y_0M_{i0}+y_1M_{i1}+y_2M_{i2}\right)
    & (i\in\{0,1,2\}).
\end{align*}

\begin{figure}
    \centering
    \includegraphics[scale=1]{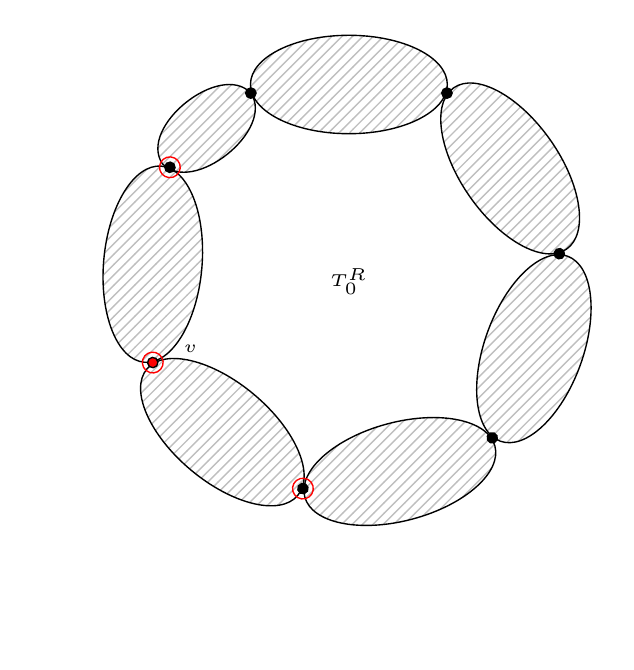}
    \qquad \qquad \qquad
    \includegraphics[scale=1]{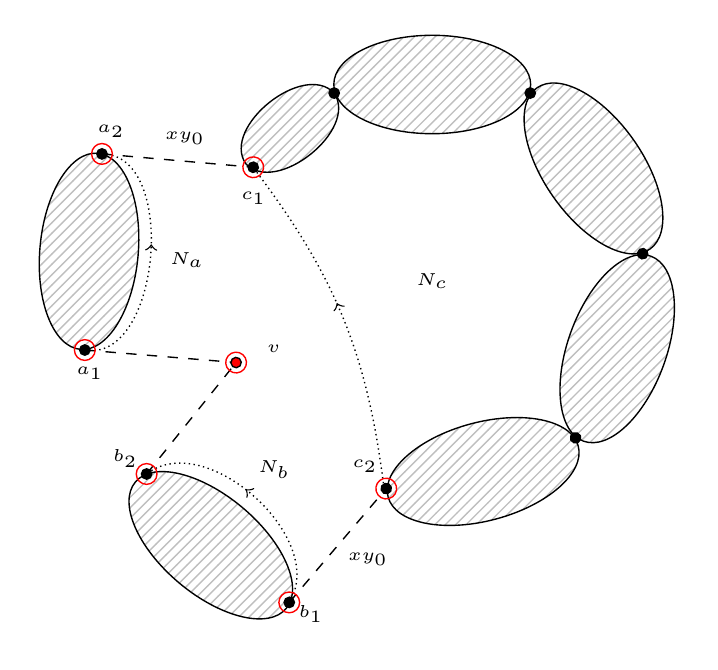}
    \caption{
        Decomposition of a RM-tree, restricted at a vertex $v$ of type 0, and rooted at an R-node.
    }
    \label{fig:Rnode_mis}
\end{figure}

\paragraph{\sl Restricted RM--tree rooted at an R--node.}

A typical case is illustrated in Figure~\ref{fig:Rnode_mis}: on each side of $v$ are two non--series networks $N_a$ and $N_b$, with respective poles $a_1, a_2$ and $b_1, b_2$, each of type 0 (thus, each encoded by $D_{00} - S_{00}$), where vertices $a_2$ and $b_1$ are identified with $v$, and vertices $a_1$ and $b_2$ are each identified with the poles $c_1, c_2$ of a third network $N_c$ (encoded by $D_{00}$).
In the latter, the vertices involved stop being poles and we must add their two labels: $x^2y_0^2$.
This gives $x^2(D_{00} - S_{00})^2y_0^2D_{00}/2$, where the factor 1/2 encodes the symmetry around $v$ when exchanging $N_a$ with $N_b$ and the two the poles of $N_c$.

The general decomposition for $i\in\{0,2\}$ is obtained by doing a careful (but similar to what was done above) analysis of each possible cases, and we have:
\begin{equation*}
    T_i^R = \frac{x^2}{2}\left( (D_{i0} - S_{i0})y_0A_{0i} + (D_{i1} - S_{i1})y_1(A_{1i} + A_{2i}) + (D_{i2} - S_{i2})(y_1A_{1i} + y_2A_{2i}) \right),
\end{equation*}
where for $ji\in\{00,01,02,10,11,12,20,21,22\}$:
\begin{equation*}
    A_{ji} = D_{j0}y_0(D_{0i} - S_{0i}) + (D_{j1} + D_{j2})y_1(D_{1i} - S_{1i}) + (D_{j1}y_1 + D_{j2}y_2)(D_{2i} - S_{2i}).
\end{equation*}
The latter notation is understood in Figure \ref{fig:Rnode_mis} as follows: $N_b$ is encoded by $D_{i0} - S_{i0}$ while $N_c$ and $N_a$ are encoded by $A_{0i}$ (the summands of $A_{0i}$ correspond to the possible types of the vertices $a_2$ and $c_1$, while those of $T_i^R$ correspond to the types of $b_1$ and $c_2$).
Using the same notation, we deal similarly with the (more involved) case $i=1$:
\begin{align*}
        & T_1^R = \ds\frac{x^2}2( (D_{10} - S_{10})y_0(A_{01} + A_{02}) + (D_{20} - S_{20})y_0A_{01} + (D_{22} - S_{22})(y_2A_{21} + y_1A_{11}) \\
        & \qquad\qquad + (D_{12} - S_{12})(2y_1(A_{11} + A_{12}) + y_2(A_{21} + A_{22})) \\
        & \qquad\qquad + (D_{11} - S_{11})y_1(A_{11} + A_{21} + A_{12} + A_{22}) ).
\end{align*}

\paragraph{\sl Restricted RM--tree rooted at an edge.}

As mentioned in the previous subsection, there can be no edge between two R--nodes or two M--nodes.
An edge between an R--node and an M--node of the restricted RM--tree happens when the poles of a series network are identified with the poles of a parallel network and such that the root vertex inducing the restricted tree is one of the poles.
In the three following equations, we specify this interaction in each of the cases depending on the types of the poles of both the series and the parallel network:
\begin{align*}
        & T_i^{R-M} = x(y_0S_{i0}P_{i0} + y_1(S_{i1}(P_{i1} + P_{i2}) + S_{i2}P_{i1}) + y_2S_{i2}P_{i2})
        & (i\in\{0,2\}), \\
        & T_1^{R-M} = x(y_0(S_{10}(P_{10} + P_{20}) + S_{20}P_{10}) + y_2(S_{12}(P_{12} + P_{22}) + S_{22}P_{12}) \\
        & \qquad\qquad + y_1(S_{11}(P_{11} + P_{12} + P_{21} + P_{22}) + 2S_{12}(P_{11} + P_{21}) + S_{22}P_{11})).
\end{align*}

\paragraph{\sl Vertex-rooted 2--connected SP graphs carrying an indepedent set.}

And we can finally apply the dissymmetry theorem for trees on each of the families of restricted RM-trees:
\begin{align}\label{eq:Bi_sp_mis}
    & B_i - xyy_i = T_i = T_i^M + T_i^R - T_i^{R-M}
    & (i\in\{0,1,2\}).
\end{align}

\subsubsection{Number and expected size of maximal independent sets}

As in the case of cacti graphs, we use \eqref{eq:Bi_sp_mis} to apply the substitutions $B_i = T_i + xyy_i$ ($i=0,1,2$) in the system given in Lemma~\ref{lem:blocks_mis}.
The resulting system will be denoted by ${\cal S}$.
In theory one could also compute the determinant of the Jacobian of ${\cal S}$.
In this case however, its size prevents us to do so directly with a small computer.

Instead, we first consider the determinant of the Jacobian of the system in Lemma~\ref{lem:blocks_mis} without applying the substitutions in~\eqref{eq:Bi_sp_mis}.
So that each partial derivatives of the $B_i$'s are seen as unknown functions to be computed separately using~\eqref{eq:Bi_sp_mis}, then substituted in order to obtain ${\cal DJ(S)}$ as in cacti graphs.
And setting $y_0=y_1=1$ and $x=\rho_1$ in the system ${\cal S \cup DJ(S)}$, one gets the following approximation:
\begin{equation*}
    \rho_1 \approx 0.0770510356
\end{equation*}

To finally estimate the number of independent sets in a uniformly at random SP graph on $n$ vertices, we apply the same method as in the case of cacti graphs but now with the following estimate (see \cite[Section 3]{BGKN07}) of the number of SP graphs on $n$ vertices:
\begin{align*}
    & g_n \underset{x\to \rho}\sim g\cdot n^{-5/2}\cdot \rho^{-n}\cdot n!,
    & \text{where } \rho \approx 0.1102133467 \text{ and } g>0.
\end{align*}
This implies that the number of maximal independent sets in a random SP graph on $n$ vertices is asymptotically:
\begin{align*}
    & AI_n = \frac{|\mathcal{I}_n|}{g_n} \sim C \cdot \alpha^{n},
    & \text{where } C = A_1/g > 0 \text{ and } \alpha = \rho/\rho_1 \approx 1.4303941013.
\end{align*}

Now, after setting $x = \rho_1(y_0)$ in the system ${\cal S \cup DJ(S)}$, we can differentiate it to obtain a system defining $\rho_1'(y_0)$.
The only difference with the system for cacti graphs is that we now have to compute each derivative in ${\cal S \cup DJ(S)}$ by first computing it separately in \eqref{eq:Bi_sp_mis}.
From there, we can again set $y_0=y_1=1$ to substitute $\rho_1(1) = \rho_1$ with its approximated value.
This gives us a system characterising uniquely $\rho_1'(1)$ that we can again solve numerically.
In particular, we obtain:
\begin{equation*}
    \rho_1' = \rho_1'(1) \approx -0.0207425825.
\end{equation*}
So that the expected size of a maximal independent set in a uniformly at random SP graph on $n$ vertices is asymptotically:
\begin{align*}
    & \mathbb{E}[SI_n] = \mu n + O(1),
    & \text{where } \mu = -\frac{\rho_1}{\rho_1'} \approx 0.2690257588.
\end{align*}

\subsubsection{Matchings in 2--connected vertex-rooted series--parallel graphs}

The generating functions counting restricted RM--trees associated to the network decompositions of vertex--rooted SP graphs carrying a matching are derived similarly to the case of independent sets.
In this section, we will hence only briefly explicit the systems of equations corresponding to the different ways of rooting the restricted RM--trees.

\paragraph{\sl Restricted RM--tree rooted at an M--node.}

It holds that:
\begin{align*}
    & \bar T_i^M = x\left(y_0\bar M_{i0} + y_1\bar M_{i1} + y_2\bar M_{i2}\right)
    & (i\in\{0,1,2\}),
\end{align*}
where we set:
\begin{align*}
    & \bar M_{00} = \exp_{\geq 3}(\bar S_{00}), \\
    & \bar M_{01} = \bar S_{01}(y\exp_{\geq 1}(\bar S_{02}) + \exp_{\geq 2}(\bar S_{02})), \\
    & \bar M_{02} = y\exp_{\geq 2}(\bar S_{02})+\exp_{\geq 3} (\bar S_{02}), \\
    & \bar M_{11} = \exp_{\geq 2}(\bar S_{22})(y + \bar S_{11}) + \exp_{\geq 1}(\bar S_{22})(y\bar S_{11} + \bar S_{12}^2) + \exp(\bar S_{22})y\bar S_{12}^2, \\
    & \bar M_{12} = \bar S_{12}(y\exp_{\geq 1}(\bar S_{22}) + \exp_{\geq 2}(\bar S_{22})), \\
    & \bar M_{22} = y\exp_{\geq 2}(\bar S_{22})+\exp_{\geq 3}(\bar S_{22}).
\end{align*}

\paragraph{\sl Restricted RM--tree rooted at an R--node.}

For $i\in\{0,2\}$, it holds that:
\begin{equation*}
    \bar T_i^R = \frac{x^2}{2} ((\bar D_{i0} - \bar S_{i0})z_0\bar A_{0i} + (\bar D_{i1} - \bar S_{i1})z_1\bar A_{2i}  + (\bar D_{i2} - \bar S_{i2})(z_1\bar A_{1i} + z_2\bar A_{2i})),
\end{equation*}
where for $ij\in\{00,01,02,10,11,12,20,21,22\}$, we set:
\begin{equation*}
    \bar A_{ji} = (\bar D_{j0} - \bar S_{j0})z_0\bar D_{0i} + (\bar D_{j1} - \bar S_{j1})z_1\bar D_{2i} + (\bar D_{j2} - \bar S_{j2})(z_1\bar D_{1i} + z_2\bar D_{2i}).
\end{equation*}
Finally, for $i=1$ we have:
\begin{align*}
    \bar T_1^R = \,\,
    & \frac{x^2}{2} ( (\bar D_{10} - \bar S_{10})z_0\bar A_{02} + (\bar D_{20} - \bar S_{20})z_0\bar A_{01} + (\bar D_{11} - \bar S_{11})z_1\bar A_{22} \\
    & + (\bar D_{12} - \bar S_{12})(z_12\bar A_{12} + z_2\bar A_{22}) + (\bar D_{22} - \bar S_{22})(z_1\bar A_{11} + z_2\bar A_{21})).
\end{align*}

\paragraph{\sl Restricted RM--tree rooted at an edge.}

Similar restrictions, as for independent sets, between edges of the restricted RM--tree apply in the case of matchings:
\begin{align*}
        & \bar T_i^{R-M} = x(z_0\bar S_{i0}\bar P_{i0} + z_1(\bar S_{i1}\bar P_{i2} + \bar S_{i2}\bar P_{i1}) + z_2\bar S_{i2}\bar P_{i2})
        \qquad\qquad (i\in\{0,2\}), \\
        & \bar T_1^{R-M} = x(z_0(\bar S_{10}\bar P_{20} + \bar S_{20}\bar P_{10}) + z_2(\bar S_{12}\bar P_{22} + \bar S_{22}\bar P_{12}) + z_1(\bar S_{11}\bar P_{22} + 2\bar S_{12}\bar P_{21} + \bar S_{22}\bar P_{11})).
\end{align*}

\paragraph{\sl Vertex--rooted 2--connected series--parallel graphs carrying a matching.}

And we can finally apply the dissymmetry theorem for trees to obtain:
\begin{align}\label{eq:Bi_sp_match}
    & \bar B_i - \bar e_i = \bar T_i = \bar T_i^M + \bar T_i^R - \bar T_i^{R-M}
    & (i\in\{0,1,2\}),
\end{align}
where $\bar e_0 = xyy_2$, $\bar e_1 = xyy_1$ and $\bar e_2 = xy(y_0 + y_2)$.

\subsubsection{Number and expected size of maximal matchings}

Here again, the size of the system obtained by substituting the equations from \eqref{eq:Bi_sp_match} into the system of Lemma \ref{lem:blocks_mm} prevents us to directly compute the determinant of its Jacobian.
Instead, we also consider the determinant of the Jacobian of the system of Lemma \ref{lem:blocks_mm} without substitution and compute each partial derivatives directly on each equation of \eqref{eq:Bi_sp_match}, then apply the substitutions.

From the resulting system, one can then obtain the following approximation:
\begin{equation*}
    \rho_2\approx 0.0749665399.
\end{equation*}
Ths implies that the number of maximal matchings in a uniformly at random SP graph on $n$ vertices is asymptotically:
\begin{align*}
    & AM_n \sim D \cdot \beta^{n},
    & \text{where } D > 0 \text{ and } \beta \approx 1.4701671808.
\end{align*}

Concerning the expected size of a maximal matching, remember that we must now consider the perturbation with respect to the variable $z_1^2$.
And similarly to before, we get:
\begin{equation*}
    \rho_2' = \rho_2'(1) \approx -0.0478172197.
\end{equation*}
So that the expected size of a maximal matching in a random SP graph on $n$ vertices is asymptotically:
\begin{align*}
    & \mathbb{E}[SM_n] = \lambda n + O(1),
    & \text{where } \lambda = -\frac{1}{2}\cdot\frac{\rho_2}{\rho_2'} \approx 0.3189237476.
\end{align*}

\end{document}